\newtheorem{thm}{Theorem}[section]
\newtheorem{cor}[thm]{Corollary}
\newtheorem{lem}[thm]{Lemma}
\newtheorem{prop}[thm]{Proposition}
\numberwithin{equation}{section}
\newcommand{\seq}[1]{\langle #1\rangle}
\title{Distance-regular Cayley graphs \linebreak with small valency}
\author{Edwin R. van Dam}
\address{Department of Econometrics and O.R., Tilburg University, The Netherlands}
\email{Edwin.vanDam@uvt.nl}
\author{Mojtaba Jazaeri}
\address{Department of Mathematics, Shahid Chamran University of Ahvaz, Ahvaz, Iran}
\address{School of Mathematics, Institute for Research in Fundamental Sciences (IPM), P.O. Box: 19395-5746, Tehran, Iran}
\email{M.Jazaeri@scu.ac.ir, M.Jazaeri@ipm.ir}
\DeclareMathOperator{\Cay}{Cay}
\DeclareMathOperator{\tr}{tr}
\DeclareMathOperator{\mud}{mod}
\def\G{\Gamma}
\begin{document}

\subjclass[2010]{05E30}

\keywords{Cayley graph; Distance-regular graph}

\begin{abstract} We consider the problem of which distance-regular graphs with small valency are Cayley graphs.
We determine the distance-regular Cayley graphs with valency at most $4$, the Cayley graphs among the  distance-regular graphs with known putative intersection arrays for valency $5$, and the Cayley graphs among all distance-regular graphs with girth $3$ and valency $6$ or $7$. We obtain that the incidence graphs of Desarguesian affine planes minus a parallel class of lines are Cayley graphs. We show that the incidence graphs of the known generalized hexagons are not Cayley graphs, and neither are some other distance-regular graphs that come from small generalized quadrangles or hexagons. Among some ``exceptional'' distance-regular graphs with small valency, we find that the Armanios-Wells graph and the Klein graph are Cayley graphs.
\end{abstract}

\maketitle

\section{Introduction}
The classification of distance-regular Cayley graphs is an open problem in the area of algebraic graph theory \cite[Problem 71-(ii)]{DKT}. Partial results have been obtained by Abdollahi and the authors \cite{ADJ}, Miklavi\v{c} and Poto\v{c}nik \cite{MP1,MP2}, and Miklavi\v{c} and \v{S}parl \cite{MS}, among others.

Here we focus on distance-regular graphs with small valency.
It is known that there are finitely many distance-regular graphs with fixed valency at least $3$ \cite{BDKM}. In addition, all distance-regular graphs with valency $3$ are known (see \cite[Thm.~7.5.1]{BCN}), as are all intersection arrays for distance-regular graphs with valency $4$ \cite{BK}. There is however no complete classification of distance-regular graphs with fixed valency at least $5$. It is believed though that every distance-regular graph with valency $5$ has intersection array as in Table \ref{tabledrgvalency5}. Besides these results, all intersection arrays for distance-regular graphs with girth $3$ and valency $6$ or $7$ have been determined. We therefore study the problem of which of these distance-regular graphs with small valency are Cayley graphs.

After some preliminaries in Section \ref{sec:pre}, we study several families of distance-regular graphs that have members with small valency. Several of the results in this section are standard. Besides these standard results, we obtain in Proposition \ref{prop:affineplane} that the incidence graphs of the Desarguesian affine planes minus a parallel class of lines are Cayley graphs. In Section \ref{generalizedpolygons}, we study generalized polygons. By extending a known method for generalized quadrangles, we are able to prove (among other results) that the incidence graphs of all known generalized hexagons are not Cayley graphs; see Proposition \ref{IG(GH)}. Moreover, we show that neither are some other distance-regular graphs that come from small generalized quadrangles or hexagons.

We then determine all distance-regular Cayley graphs with valency $3$ and $4$ in Sections \ref{sec:3} and \ref{sec:4}, respectively. Next, we characterize in Section \ref{sec:5} the Cayley graphs among the distance-regular graphs with valency $5$ with one of the known putative intersection arrays. Most of our new results (besides the above mentioned ones) are negative, in the sense that we prove that certain distance-regular graphs are not Cayley graphs. However, we surprisingly do find that the Armanios-Wells graph is a Cayley graph. This gives additional, previously unknown, information about the structure of this distance-transitive graph on $36$ vertices, as we remark after Proposition \ref{Armanios-Wells}.

In the final section, we consider distance-regular graphs with girth $3$ and valency $6$ or $7$. Most of these graphs have been discussed in earlier sections. As another exception, we obtain that the Klein graph on $24$ vertices is a Cayley graph.

\section{Preliminaries}\label{sec:pre}
All graphs in this paper are undirected and simple, i.e., there are no loops or multiple edges. A connected graph $\Gamma$ is called distance-regular with diameter $d$ and intersection array $$\{b_{0},b_{1},\ldots,b_{d-1};c_{1},c_{2},\ldots,c_{d}\}$$ whenever for every pair of vertices $x$ and $y$ at distance $i$, the number of neighbors of $y$ at distance $i-1$ from $x$ is $c_{i}$ and the number of neighbors of $y$ at distance $i+1$ from $x$ is $b_{i}$, for all $i=0,\dots,d$. It follows that a distance-regular graph is regular with valency $k=b_{0}$. The number of neighbors of $y$ at distance $i$ from $x$ is denoted by $a_{i}$, and $a_{i}=k-b_{i}-c_{i}$. The girth of a distance-regular graph follows from the intersection array. The odd-girth (of a non-bipartite graph) equals the smallest $i$ for which $a_i>0$; the even-girth equals the smallest $i$ for which $c_i>1$. A distance-regular graph is called antipodal if its distance-$d$ graph is a disjoint union of complete graphs. This property follows from the intersection array.

A distance-regular graph with diameter $2$ is called strongly regular. A strongly regular graph with parameters $(n,k,\lambda,\mu)$ is a $k$-regular graph with $n$ vertices such that every pair of adjacent vertices has $\lambda$ common neighbors and every pair of non-adjacent vertices has $\mu$ common neighbours. Thus, $\lambda=a_1$, $\mu=c_2$, and the intersection array is $\{k,k-1-\lambda;1,\mu\}$. For more background on distance-regular graphs, we refer to the monograph \cite{BCN} or the recent survey \cite{DKT}.

Let $G$ be a finite group and $S$ be an inverse-closed subset of $G$ not containing the identity element $e$ of $G$. Then the (undirected) Cayley graph $Cay(G,S)$ is a graph with vertex set $G$ such that two vertices $a$ and $b$ are adjacent whenever $ab^{-1} \in S$. Recall that all Cayley graphs are vertex-transitive and a Cayley graph $Cay(G,S)$ is connected if and only if the subgroup generated by $S$, which is denoted by $\seq{S}$, is equal to $G$. Following Alspach \cite{BW}, the subset $S$ in $Cay(G,S)$ is called the connection set. It is well-known that a graph $\G$ is a Cayley graph if and only if it has a group of automorphisms $G$ that acts regularly on the vertices of $\G$.

The commutator of two elements $a$ and $b$ in a group $G$ is denoted by $[a,b]$. Furthermore, the center of $G$ is denoted by $Z(G)$.

%$b^{-1}ab$ is denoted by $a^{b}$
%the commutator subgroup of a group $G$ is denoted by $G^{\prime}$

\subsection{Halved graphs}

The following observation is straightforward but very useful. Let $\Gamma$ be a Cayley graph $Cay(G,S)$ with diameter $d$. Define sets $S_i$ recursively by $S_{i+1}=SS_{i} \setminus (S_{i} \cup S_{i-1})$ for $i=2,\dots,d$, where $S_1=S$ and $S_{0}=\{e\}$. Then the distance-$i$ graph $\Gamma_i$ of $\Gamma$ is again a Cayley graph, $Cay(G,S_{i})$. In particular, when $\Gamma$ is bipartite, then its halved graphs (the components of $\Gamma_2$) are Cayley graphs.

\begin{lem} \label{distance-i}
The distance-$i$ graph of a Cayley graph $\Gamma$ with diameter $d$ is again a Cayley graph, for $i=2,\dots,d$. Also the halved graphs of $\Gamma$ are Cayley graphs.
\end{lem}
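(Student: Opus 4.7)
The plan is to exploit the regular action of $G$ on itself by right multiplication, which consists of graph automorphisms of $\Gamma=\Cay(G,S)$. Since graph automorphisms preserve distances, the same action restricts to a regular action on the vertex set of each distance-$i$ graph $\Gamma_i$, so at once $\Gamma_i=\Cay(G,T_i)$ where $T_i=\{g\in G:d_\Gamma(e,g)=i\}$; inverse-closedness of $T_i$ follows from the automorphism $x\mapsto xg^{-1}$ together with the undirectedness of $\Gamma$.

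Next I would verify by induction on $i$ that $T_i$ coincides with the recursive set $S_i$ in the excerpt. In the inductive step, if $d_\Gamma(e,g)=i+1$ then the penultimate vertex on a shortest $e$--$g$ path lies in $T_i=S_i$ and differs from $g$ by a left factor in $S$, so $g\in SS_i$; conversely, any $g=sh$ with $s\in S$ and $h\in S_i$ satisfies $i-1\le d_\Gamma(e,g)\le i+1$ by the triangle inequality, so after removing $S_i\cup S_{i-1}$ precisely the distance-$(i+1)$ elements remain. This triangle-inequality bookkeeping is the one genuinely non-formal step, and the place where I would expect to have to be most careful, particularly to rule out distances below $i-1$.

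For the halved-graphs claim, bipartiteness of $\Gamma$ together with $G=\seq{S}$ forces the set $H$ of elements expressible as products of even numbers of elements of $S$ to be an index-$2$ subgroup of $G$, and $H$ equals the bipartition class $V_0$ containing $e$. Since $S_2\subseteq H$ (elements at distance $2$ from $e$ lie in the same bipartition class as $e$), the halved graph on $V_0$ is simply $\Cay(H,S_2)$. For the other bipartition class $V_1=Hg$, the automorphism $x\mapsto xg^{-1}$ of $\Gamma$ carries $V_1$ onto $H$ and takes its halved graph onto $\Cay(H,S_2)$, so it too is a Cayley graph.
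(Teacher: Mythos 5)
Your proof is correct and matches the paper's (essentially unstated) argument: the paper simply defines $S_{i+1}=SS_i\setminus(S_i\cup S_{i-1})$ and asserts that $\Gamma_i=\mathrm{Cay}(G,S_i)$, calling this straightforward, and your regular-action observation plus the induction showing $S_i=\{g: d_\Gamma(e,g)=i\}$ supplies exactly the routine verification being omitted. Your handling of the halved graphs via the index-$2$ subgroup of even words likewise fills in what the paper leaves implicit, and the triangle-inequality step you flag is indeed fine, since adjacency of $g=sh$ to $h$ forces $d_\Gamma(e,g)\ge i-1$.
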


Clearly, also the complement $\overline{G}$ of a Cayley graph $G$ is a Cayley graph.

\subsection{Large girth}

In the later sections we will see many distance-regular graphs with large girth. The following lemmas will then turn out to be useful.

\begin{lem} \label{abelian}
Let $\Gamma$ be a Cayley graph $Cay(G,S)$ with girth $g$, where $|S|>2$. If $G$ is abelian, then $g\leq 4$ and $\G$ contains a --- not necessarily induced --- $4$-cycle.
\end{lem}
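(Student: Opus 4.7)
The plan is to exhibit an explicit 4-cycle in $\Gamma$, which immediately yields $g\le 4$. The cycle will have the form $e \to s \to st \to t \to e$ for suitably chosen $s,t\in S$. By commutativity, the consecutive ``differences'' along this closed walk are $s$, $(st)s^{-1}=t$, $t(st)^{-1}=s^{-1}$, and $t^{-1}$, each of which lies in $S$ because $S$ is inverse-closed; so the walk is in the Cayley graph.

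The real task is to choose $s,t\in S$ so that the four vertices $e,s,st,t$ are pairwise distinct. Distinctness of $e$ from $s$ and from $t$ is automatic since $e\notin S$, and $s\neq st$ and $t\neq st$ follow from $s,t\neq e$. What remains is to arrange $s\neq t$ and $st\neq e$, i.e., $t\neq s^{-1}$.

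This is exactly where the hypothesis $|S|>2$ enters. I will partition $S$ into inverse-pairs $\{x,x^{-1}\}$, each of size $1$ or $2$; since $|S|>2$, there must be at least two such pairs. Picking $s$ from one pair and $t$ from another forces $s\notin\{t,t^{-1}\}$, which is exactly what is needed, so $e,s,st,t$ is a genuine $4$-cycle of $\Gamma$, establishing both $g\le 4$ and the existence of a (not necessarily induced) $4$-cycle. I do not anticipate any substantial obstacle: commutativity does all the structural work, and the only delicate point is this element selection, handled by the pigeonhole observation on inverse-pairs.
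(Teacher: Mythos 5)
Your proof is correct and follows essentially the same route as the paper's: exhibit the $4$-cycle $e \sim s \sim st \sim t \sim e$ using commutativity, with $|S|>2$ guaranteeing a choice of $s,t$ with $s\notin\{t,t^{-1}\}$. Your pigeonhole argument on inverse-pairs just makes explicit the element-selection step that the paper leaves implicit.
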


\begin{proof}
Let $a$ and $b$ be in $S$ such that $a \neq b^{-1}$. Then $e \sim a \sim ba=ab \sim b \sim e$, so $\G$ contains a $4$-cycle, and hence $g \leq 4$.
\end{proof}

\begin{lem} \label{Element order-Connection set}
Let $\Gamma$ be a Cayley graph $Cay(G,S)$ with girth $g>4$. Suppose that $S$ contains an element of order $m$, with $m>2$. Then $g \leq m$ and the vertices of $\G$ can be partitioned into induced $m$-cycles.
\end{lem}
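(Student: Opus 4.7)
The plan is to exploit the cyclic subgroup $\langle a\rangle$ of order $m$ generated by the hypothesized element $a \in S$: its cosets in $G$ will provide the partition of the vertex set into $m$-cycles. First I would produce a single $m$-cycle from $a$ itself. Since $m>2$, the elements $a$ and $a^{-1}$ are distinct and both lie in $S$, and since $a$ has order $m$ the powers $e,a,a^2,\dots,a^{m-1}$ are pairwise distinct. Each consecutive pair $a^i,a^{i+1}$ satisfies $a^i(a^{i+1})^{-1}=a^{-1}\in S$, so they are adjacent in $\Gamma$; the same holds for $a^{m-1}$ and $e=a^m$. This gives an $m$-cycle, so $g\leq m$.

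Next, since right multiplication by any element of $G$ is a graph automorphism of $\Cay(G,S)$ (because $(uh)(vh)^{-1}=uv^{-1}$), applying this to the cycle above shows that every coset $\langle a\rangle h=\{h,ah,\dots,a^{m-1}h\}$ also spans an $m$-cycle in $\Gamma$. These cosets partition the vertex set $G$, so this yields the desired partition into $m$-cycles.

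The only nontrivial step is proving that each of these cycles is induced, and this is where the hypothesis $g>4$ enters. I would argue by contradiction: suppose that some power $a^k$ with $2\leq k\leq m-2$ also lies in $S$. Then the four vertices $e,a,a^{k+1},a^k$ form a $4$-cycle, since the successive ``differences'' $ub(ub')^{-1}$ work out to $a^{-1}, a^{-k}, a, a^k$, all of which lie in $S$. The four vertices are pairwise distinct precisely because $k\not\equiv 0,\pm 1\pmod m$. Such a $4$-cycle contradicts $g>4$, so $S\cap\langle a\rangle\subseteq\{a,a^{-1}\}$, and each coset $\langle a\rangle h$ then spans an induced $m$-cycle. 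The main obstacle, albeit a minor one, is verifying that the four vertices in the contradiction-cycle are genuinely distinct; this is what dictates the range $2\leq k\leq m-2$ and is the sole place the hypothesis $g>4$ is actually needed.
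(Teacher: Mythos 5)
Your proof is correct and takes essentially the same route as the paper: the paper also builds the $m$-cycle $b\sim ab\sim\cdots\sim a^{m-1}b\sim b$ on each coset of $\langle a\rangle$ and derives the same contradiction, exhibiting the $4$-cycle $b\sim a^ib\sim a^{i+1}b\sim ab\sim b$ when some $a^i\in S$ with $1<i<m-1$. Your extra care in checking that the four vertices of the offending $4$-cycle are distinct is a welcome (if minor) addition to the paper's terser argument.
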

\begin{proof}
Suppose $a \in S$ has order $m>2$. Then $b \sim ab \sim a^{2}b \sim \cdots \sim a^{m-1}b \sim b$, for every $b\in G$. Now suppose that this $m$-cycle is not induced. Then it follows that there is an $i$, with $1<i<m-1$, such that $a^{i} \in S$. But then $b \sim a^{i}b \sim a^{i+1}b \sim ab \sim b$, which contradicts the assumption that $g>4$.
So every vertex is in an induced $m$-cycle, and the result follows.
\end{proof}

Note that the above partition of vertices into $m$-cycles is the same as the partition of $G$ into the right cosets of the cyclic subgroup $H$ generated by $a$.

In general, if $\G$ is a Cayley graph $Cay(G,S)$, and $H$ is a subgroup of $G$, then the induced subgraph on each of the right cosets of $H$ is regular, and all these subgraphs are isomorphic to each other.

\subsection{Normal subgroups and equitable partitions}\label{sec:normal}

If $\G$ is a Cayley graph $Cay(G,S)$ and $H$ is a normal subgroup of $G$, then the partition into the (distinct) cosets $Hc$ is equitable, in the sense that each vertex in $Hc$ has the same number of neighbors in $Hb$, for each $c$ and $b$. This number is easily shown to be $|S \cap Hcb^{-1}|$. The quotient matrix $Q$ of the equitable partition contains these numbers, i.e. $Q_{Hc,Hb}=|S \cap Hcb^{-1}|$. It is well-known and easy to show (by ``blowing up'' eigenvectors \cite[Lemma 2.3.1]{BH}) that each eigenvalue of $Q$ is also an eigenvalue of $\G$. We will use this fact in some of the later proofs, for example to show that the Biggs-Smith graph is not a Cayley graph.

Note also that the quotient matrix is in fact the adjacency matrix of a Cayley multigraph on the quotient group $G/H$, with connection multiset $S/H=\{Hs \mid s \in S\}$. When $\G$ is an antipodal distance-regular (Cayley) graph with diameter $d$, then it is easy to show that $N_d=S_d \cup \{e\}$ is a subgroup of $G$. If this group is normal, then it follows that there is a Cayley graph over the quotient group $G/N_d$ with connection set $\{N_ds \mid s \in S\}$ (cf. \cite[Lemma 2.2]{MP2}). This quotient graph is the folded graph of $\G$, and it is well-known to be distance-regular, too.

%If H has index 3, then the (symmetric) quotient matrix must be $[a b b;b a b;b b a]$, which has eigenvalues $a+2b$ and $a-b$ (twice). For a connected graph with valency 3, we have only one option: $a=b=1$, with eigenvalue 0.
%%
%If H has index 4, then there are more options. The quotient matrix must be $[a b c d;b a d c;c d a b;d c b a]$, with eigenvalues $a+b+c+d$, $a+b-c-d$, $a-b+c-d$, and $a-b-c+d$. If the graph is connected, then at most one of $b,c,d$ can be 0.
%For valency 5, all but one option gives an eigenvalue 3 or 1. Only (not counting the symmetry between $b,c,d$) $a=0,b=c=2,d=1$ gives eigenvalues -1 and -3.}\\

\subsection{Dihedral groups}

Miklavi\v{c} and Poto\v{c}nik \cite{MP1,MP2} classified the distance-regular Cayley graphs over a cyclic or dihedral group. They already observed in \cite{MP1} that a primitive distance-regular graph over a dihedral group must be a complete graph. In \cite{MP2}, they moreover showed the following.

\begin{prop}\cite{MP2}\label{dihedralclassification}
A distance-regular Cayley graph over a dihedral group must be a cycle, complete graph, complete multipartite graph, or the bipartite incidence graph of a symmetric design.
\end{prop}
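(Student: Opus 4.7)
The plan is to reduce the problem to the earlier Miklavič–Potočnik classification of distance-regular Cayley graphs on cyclic groups by exploiting the rotation subgroup $R = \seq{r}$ of order $n$, which is normal of index $2$ in $D_{2n}$. Partition the connection set as $S = S_R \sqcup S_F$ where $S_R = S \cap R$ and $S_F$ consists of reflections. Connectedness of $\Gamma$ forces $S_F \neq \emptyset$, since otherwise $\seq{S} \subseteq R \neq G$. The observation already quoted from \cite{MP1}, that a primitive distance-regular Cayley graph over a dihedral group must be a complete graph, lets me assume from here on that $\Gamma$ is imprimitive, i.e.\ bipartite and/or antipodal.

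In the bipartite case, the unique bipartition of a connected bipartite graph together with the existence of reflection edges (which join the two cosets of $R$) forces $S_R = \emptyset$ and the bipartition to be the pair of cosets of $R$. By Lemma \ref{distance-i}, the halved graph on $R$ is then a Cayley graph on the cyclic group $R$ and is itself distance-regular, so by the cyclic classification in \cite{MP1} it must be a cycle, complete graph, or complete multipartite graph. A lifting argument, matching intersection arrays and using that a bipartite distance-regular graph is essentially determined by its halved graph together with $c_2$, then identifies $\Gamma$ as one of: a $2n$-cycle (halved is a smaller cycle), a complete bipartite graph (halved is a complete graph, diameter $2$), or the bipartite incidence graph of a symmetric design (halved is complete or complete multipartite, diameter $3$).

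In the antipodal non-bipartite case, the discussion in Section \ref{sec:normal} shows that the antipodal fiber $N_d = S_d \cup \{e\}$ at the identity is a subgroup of $G$, and assuming it is normal the folded graph is a primitive distance-regular Cayley graph on a quotient of $D_{2n}$, which is again cyclic or dihedral, and so must be a complete graph by \cite{MP1}. Thus $\Gamma$ is an antipodal cover of some $K_m$, and the $2\times 2$ quotient matrix of the equitable partition into cosets of $R$ supplies additional eigenvalues of $\Gamma$ which, combined with the known feasibility conditions for antipodal covers of complete graphs, collapse $\Gamma$ to a complete multipartite graph. The main obstacle I foresee is making this antipodal case work cleanly: establishing normality of $N_d$ inside $D_{2n}$ (or bypassing it by working directly with the coset graph) and then ruling out the various infinite families of antipodal distance-regular covers of complete graphs that could in principle arise over a dihedral group; the bipartite case, while still requiring case analysis, proceeds more routinely from the cyclic classification.
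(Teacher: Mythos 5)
The paper offers no proof of this proposition: it is quoted verbatim from \cite{MP2}, where it is the main theorem of a long and technical paper. Your sketch therefore cannot be compared to an argument in the text, but it can be checked on its own terms, and it has genuine gaps. In the bipartite case, the bipartition of a connected bipartite Cayley graph $\Cay(G,S)$ is given by the cosets of the index-$2$ subgroup of elements at even distance from $e$; when $n$ is even, $D_{2n}$ has, besides the rotation subgroup $R$, two further index-$2$ subgroups that are themselves dihedral of order $n$, so bipartiteness does \emph{not} force $S_R=\emptyset$ nor the bipartition to be $R\cup Rb$. In that situation the halved graph is a Cayley graph over a dihedral (not cyclic) group, and your reduction to the circulant classification of \cite{MP1} breaks down. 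Even when the halved graph is a circulant, the classification of distance-regular circulants in \cite{MP1} also contains Paley graphs of prime order and $K_{n,n}$ minus a perfect matching, which you omit from your list and would have to rule out as halved graphs of a bipartite dihedrant of diameter $\geq 4$.

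The antipodal non-bipartite case is where the real difficulty lies, and your treatment of it is not an argument. Normality of $N_d=S_d\cup\{e\}$ is assumed rather than proved (a subgroup of a dihedral group containing reflections need not be normal), the folded graph of an antipodal non-bipartite dihedrant is asserted rather than shown to be primitive, and the final step --- that feasibility conditions for antipodal covers of complete graphs ``collapse $\Gamma$ to a complete multipartite graph'' --- is precisely the hard content of \cite{MP2}: antipodal distance-regular covers of complete graphs form a rich family (Taylor graphs, covers arising from generalized quadrangles, etc.), and eliminating all of them over dihedral groups requires the detailed structural and character-theoretic analysis carried out there. As it stands, your proposal is a plausible opening reduction (primitive/bipartite/antipodal trichotomy plus the coset structure of $D_{2n}$), but both main branches contain steps that fail or are missing, so it does not constitute a proof.
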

We will see these graphs also in Section \ref{sec:some}. More importantly, we will use this classification in some of the results in the later sections.

\subsection{Erratum}\label{sec:erratum}

In \cite{ADJ}, we claimed that in the distance-regular line graph $\G$ of the incidence graph of a generalized $d$-gon of order $(q,q)$, any induced cycle is either a triangle or a $2d$-cycle. This is not correct however. Instead, every induced cycle in $\G$ is either a $3$-cycle or an even cycle of length at least $2d$. Consequently, Theorem 3.1. in \cite{ADJ} may not be correct. Instead, we have the following result.

\begin{thm} \label{lineHK}
Let $d \geq 2$, let $\Gamma$ be the line graph of the incidence graph of a generalized $d$-gon of order $(q,q)$, and suppose that $\Gamma$ is a Cayley graph $\Cay(G,S)$. Then there exist two subgroups $H$ and $K$ of $G$ such that $S=(H \cup K) \setminus \{e\}$, with $|H|=|K|=q+1$ and $H \cap K=\{e\}$ if and only if $\seq{a} \subseteq S \cup \{e\}$ for every element $a$ of order $2i$ in $S$, with $i \geq d$.
\end{thm}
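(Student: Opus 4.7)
\emph{The ``only if'' direction is immediate}: if $S=(H\cup K)\setminus\{e\}$ for subgroups $H, K$, then every $a\in S$ lies in one of them, so $\langle a\rangle$ is contained in $S\cup\{e\}$ regardless of the order of $a$ — not just for even orders $\geq 2d$. The order restriction in the hypothesis is only needed for the converse.

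For the converse, first I would identify $H$ and $K$ as the two maximal cliques of $\Gamma$ through $e$. The line-graph-of-incidence-graph structure ensures that every maximal clique of $\Gamma$ is a point-clique or a line-clique of size $q+1$, that every vertex is in exactly one of each type, and (by a short argument on triples of flags in a generalized polygon) that any three pairwise adjacent vertices share a common point or line and so lie in one such clique. Applying this at $e$ yields $H$ and $K$ with $H\cap K=\{e\}$, $|H|=|K|=q+1$, and $H\cup K=S\cup\{e\}$.

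The key intermediate step is to show $\langle a\rangle\subseteq S\cup\{e\}$ for every $a\in S$. Orders at most $3$ are trivial, and even orders at least $2d$ are precisely the hypothesis. For the remaining orders (odd $\geq 5$, or even with $4\leq m<2d$), the cycle $e,a,a^{2},\ldots,a^{m-1}$ in $\mathrm{Cay}(G,S)$ has length $m$, which by the corrected erratum cannot appear as an induced cycle of $\Gamma$. The induced subgraph on $\langle a\rangle$ is the circulant $\mathrm{Cay}(\langle a\rangle,S\cap\langle a\rangle)$, and any induced cycle of this circulant is also an induced cycle of $\Gamma$. A case analysis iterating the chord-forcing argument on sub-cycles (invoking the absence of induced $4$-cycles, $5$-cycles and short even cycles in $\Gamma$) shows that every nontrivial power of $a$ must lie in $S$. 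Combined with the triangle-in-a-maximal-clique property, this yields $\langle a\rangle\subseteq H$ whenever $a\in H\setminus\{e\}$ (and analogously for $K$): inductively, $\{e,a^{i},a^{i+1}\}$ is a triangle, so lies in the unique maximal clique through $e$ and $a$, which is $H$.

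Finally, $H$ is a subgroup. For $h\in H$, right multiplication by $h$ is a graph automorphism of $\mathrm{Cay}(G,S)$ (it preserves $xy^{-1}\in S$), sending $H$ to a maximal clique $Hh$ through $h$. Hence $Hh$ is either the point-clique of $h$ (which is $H$ itself, since point-cliques partition the vertex set) or the line-clique $L_{h}$ of $h$, and these meet $H$ in $H$ and in $\{h\}$ respectively. However $h^{2}=h\cdot h\in Hh$, and $h^{2}\in H$ by the previous step, so $|Hh\cap H|\geq 2$ (when $h$ has order $2$, use $e=h\cdot h\in Hh\cap H$ instead), forcing $Hh=H$. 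Thus $H$ is closed under right multiplication by each of its elements and so is a subgroup; $K$ is handled symmetrically. The main obstacle is the circulant analysis in Step~3: the corrected-erratum cycle constraints have to be threaded carefully, and the hypothesis on even orders $\geq 2d$ is exactly what the circulant analysis cannot cover, since for such orders the natural cycle on $\langle a\rangle$ is itself a permissible induced cycle of $\Gamma$.
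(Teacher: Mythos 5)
The paper itself states this theorem without proof---it is offered only as the corrected form of \cite[Thm.~3.1]{ADJ}---so your proposal has to stand on its own. Your overall architecture is the right one: the ``only if'' direction is indeed immediate (and holds for all orders, as you note), $H$ and $K$ must be the two maximal cliques through $e$, and your final step---right translation sends the maximal clique $H$ to a maximal clique $Hh$ through $h$, which must equal $H$ because it meets $H$ in at least two vertices---is correct and is the standard way to finish once one knows $\seq{h}\subseteq H$ for all $h\in H$.

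The gap is in your Step 3, and it sits exactly at the point on which the erratum turns. For $a\in S$ of order $m$ odd with $m\ge 5$, or even with $4\le m<2d$, you assert that ``iterating the chord-forcing argument on sub-cycles'' yields $\seq{a}\subseteq S\cup\{e\}$, but this iteration does not close up. Concretely: taking $k\ge 2$ minimal with $a^k\in S$, the cycle $e,a,\dots,a^k,e$ is induced of length $k+1$, and the corrected cycle constraint only forces $k+1=3$ \emph{or} $k+1$ even and at least $2d$. The second alternative ($k$ odd, $k\ge 2d-1$) is not excluded by any induced-cycle count, so the chord-chasing stalls without ever producing $a^2\in S$; the absence of induced $4$- and $5$-cycles does not rescue it. What does work is a structural input that your Step 3 never uses: the maximal cliques of $\G$ come in two types (point-cliques and line-cliques), and the two maximal cliques through any vertex are one of each type. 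Let $C$ be the clique on the edge $\{e,a\}$. If $Ca=C$ then $\seq{a}\subseteq C$ and you are done. If $Ca\ne C$, then the edge-cliques $C,Ca,Ca^2,\dots$ along the cycle $e,a,a^2,\dots$ alternate in type, which forces $m$ to be even (parity around the closed cycle), and the cycle projects to a closed non-backtracking walk of length $m$ in the incidence graph, whose girth is $2d$, which forces $m\ge 2d$; moreover $a^2\notin S$ in this case. So the only elements for which $\seq{a}\subseteq C$ can fail are those of even order $2i$ with $i\ge d$ and $a^2\notin S$---precisely the ones excluded by the hypothesis---and for every other $a\in S$ one gets $Ca=C$ and $\seq{a}\subseteq C$ directly. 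Until you replace the chord-iteration by an argument of this kind (or actually carry the iteration through, which I do not believe is possible from induced-cycle lengths alone), the ``if'' direction is not proved.
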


The correction of the above result has no impact on the validity of the following result in \cite[Prop.~3.4]{ADJ}. In fact, by Lemma \ref{abelian}, the proof can do without the above theorem.

\begin{prop} \label{tuttecoxeter}
The line graph of Tutte's $8$-cage is not a Cayley graph.
\end{prop}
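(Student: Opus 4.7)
The plan is to combine Lemma \ref{abelian} with a Sylow-theoretic observation about groups of order $45$. Tutte's $8$-cage is the unique $(3,8)$-cage: it has $30$ vertices, is $3$-regular, and has girth $8$. Consequently its line graph $\Gamma$ has $45$ vertices and valency $4$.

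The key step is to verify that $\Gamma$ contains no $4$-cycle at all, not even a non-induced one. Writing a putative $4$-cycle in $\Gamma$ as a cyclic sequence of four distinct edges $e_1,e_2,e_3,e_4$ of Tutte's $8$-cage with consecutive edges sharing a vertex $v_{i,i+1}\in e_i\cap e_{i+1}$, a short case analysis on the coincidences among $v_{1,2},v_{2,3},v_{3,4},v_{4,1}$ shows that such a configuration forces one of the following in Tutte's $8$-cage: a $4$-cycle, a vertex of degree at least $4$, or a triangle sharing a vertex with a further edge. All three are ruled out by the girth being $8$ and the graph being $3$-regular, so $\Gamma$ contains no $4$-cycle.

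The final ingredient is that every group $G$ of order $45=3^{2}\cdot 5$ is abelian: Sylow's theorems force the numbers of Sylow $3$- and Sylow $5$-subgroups to each be $1$, so both are normal, and $G$ is the internal direct product of a group of order $9$ and a group of order $5$; both factors are abelian. Now if $\Gamma=\Cay(G,S)$ for some such $G$, then $G$ is abelian and $|S|=4>2$, so Lemma \ref{abelian} yields a $4$-cycle in $\Gamma$, contradicting the previous paragraph. The only potentially delicate point is the enumeration of configurations in the base graph that produce $4$-cycles in its line graph; once that is carefully carried out, the conclusion is immediate.
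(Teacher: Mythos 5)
Your proof is correct and follows essentially the same route as the paper: the paper's argument is exactly that $|G|=45$ forces $G$ abelian, Lemma \ref{abelian} then produces a $4$-cycle, and the line graph of Tutte's $8$-cage has none. You simply spell out the two facts the paper asserts without proof (the Sylow argument for order $45$ and the absence of $4$-cycles in the line graph), and both verifications are sound.
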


\begin{proof} Let $\G$ be the line graph of Tutte's $8$-cage, and suppose that it is a Cayley graph $Cay(G,S)$.
Then $|G|=45$ and $|S|=4$. By Lemma \ref{abelian}, $G$ cannot be abelian because $\G$ has no $4$-cycles. But all groups of order $45$ are abelian, so we have a contradiction.
\end{proof}

\section{Some families of distance-regular graphs}\label{sec:some}

It is clear that the cycle $C_n$ is a distance-regular Cayley graph over the cyclic group. Thus, every distance-regular graph with valency $2$ is a Cayley graph.
Here we mention some other relevant families of distance-regular graphs with members of small valency.

\subsection{Complete graphs, complete multipartite graphs, and complete bipartite graphs minus a matching}\label{complete bipartite minus matching}
The complete graph $K_n$ and the regular complete multipartite graph $K_{m \times n}$ are distance-regular Cayley graphs (with diameters 1 and 2, respectively). Indeed, $K_n$ is a Cayley graph over any group of order $n$, whereas $K_{m \times n}$ (with $m$ parts of size $n$) is a Cayley graph over the cyclic group $\mathbb{Z}_{mn}$, with connection set $S=\mathbb{Z}_{mn}\setminus m\mathbb{Z}_{mn}$. Note that the complete bipartite graph $K_{2 \times n}$ is usually denoted by $K_{n,n}$.

A complete bipartite graph $K_{n,n}$ minus a complete matching, which is denoted by $K^{*}_{n,n}$, is distance-regular with valency $n-1$ and diameter $3$. Even though it may be clear that this is also a Cayley graph, we will describe it as such explicitly. Indeed, let $D_{2n}=\seq{a,b \mid a^{n}=b^{2}=1,bab=a^{-1}}$. Then the Cayley graph $Cay(D_{2n},S)$, where $S=\{ba^{i} \mid 1\leq i \leq n-1\}$ is the complete bipartite graph $K_{n,n}$ minus a complete matching, with two bipartite parts $\seq{a}$ and $b\seq{a}$. This graph can also be described as the incidence graph of a symmetric design; see Section \ref{symmetricdesign}.

\subsection{Paley graphs}\label{Paley}

The Paley graphs are defined as Cayley graphs. Let $q$ be a prime power such that $q \equiv 1~(\mud 4)$. Let $G$ be the additive group of $GF(q)$ and let $S$ be the set of nonzero squares in $GF(q)$. Then the Paley graph $P(q)$ is defined as the Cayley graph $Cay(G,S)$. It is distance-regular with diameter $2$ and valency $(q-1)/2$.

\subsection{Hamming graphs, cubes, and folded cubes}\label{Cubes}
The Hamming graph $H(d,q)$ is the $d$-fold Cartesian product of $K_q$. It can therefore be described as a Cayley graph over (for example) $\mathbb{Z}_q^d$ with the set of vectors of (Hamming) weight one as connection set. It is distance-regular with valency $d(q-1)$ and diameter $d$.

The Hamming graph $H(2,q)$ is also known as the lattice graph $L_2(q)$. The Shrikhande graph is a distance-regular graph with the same intersection array as $L_2(4)$, and it is a Cayley graph $\Cay(\mathbb{Z}_{4} \times \mathbb{Z}_{4}, \{\pm (0,1), \pm (1,0), \pm (1,1)\})$. A Doob graph is a Cartesian product of Shrikhande graphs and $K_4$'s. These Doob graphs are thereby distance-regular Cayley graphs as well.

The Hamming graph $H(d,2)$ is also known as the $d$-dimensional (hyper)cube graph $Q_d$.
The folded $d$-cube can be obtained from $Q_{d-1}$ by adding a perfect matching connecting its so-called antipodal vertices. This implies that it is a Cayley graph over $\mathbb{Z}_2^{d-1}$ with connection set the set of unit vectors and the all-ones vector. The folded $d$-cube is distance-regular with valency $d$ and diameter $\lfloor d/2 \rfloor$.

\subsection{Odd and doubled Odd graphs}\label{Oddgraph}
The Odd graph $O_{n}$ is the Kneser graph $K(2n-1,n-1)$. It is distance-regular with valency $n$ and diameter $n-1$. Godsil \cite{G} determined which Kneser graphs are Cayley graphs, and it follows that the Odd graph is not a Cayley graph.

The doubled Odd graph $DO_n$ is the bipartite double of the Odd graph $O_{n}$. It is distance-regular with valency $n$ and diameter $d=2n-1$.
It is easy to see that if a graph $\Gamma$ is a Cayley graph $Cay(G,S)$, then its bipartite double is again a Cayley graph over the group $G \times \mathbb{Z}_{2}$ with connection set $S=\{(s,1) \mid s \in S\}$. But the Odd graph is not a Cayley graph, so we cannot apply this argument. Indeed, it turns out that the doubled Odd graph is also not a Cayley graph.
\begin{prop} \label{double Odd}
The doubled Odd graph is not a Cayley graph.
\end{prop}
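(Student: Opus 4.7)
The plan is to exploit the well-known structure of the automorphism group of a bipartite double. Since $O_n$ is connected and non-bipartite for $n \geq 3$ (it has odd girth $2n-1$), the automorphism group of its bipartite double decomposes as a direct product
\[
\Aut(DO_n) = \Aut(O_n) \times \mathbb{Z}_2 = S_{2n-1} \times \mathbb{Z}_2,
\]
with the $\mathbb{Z}_2$ factor generated by the involution $\sigma$ swapping the two fibers above each vertex of $O_n$.

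Suppose for contradiction that $DO_n = \Cay(G,S)$, so that $G$ embeds in $\Aut(DO_n)$ as a regular subgroup of order $|V(DO_n)| = 2\binom{2n-1}{n-1}$. I would then set $G^+ := G \cap (S_{2n-1} \times \{1\})$, the subgroup of elements that preserve the bipartition of $DO_n$. Because $G$ is transitive on vertices and the bipartition is a nontrivial partition, some element of $G$ must exchange the two classes; hence $[G:G^+]=2$ and $|G^+| = \binom{2n-1}{n-1} = |V(O_n)|$.

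The key observation is that $G^+$ acts regularly on each bipartition class. By index considerations $G^+$ has exactly two orbits on $V(DO_n)$, one per class, each of size $|G^+|$; and vertex stabilizers in $G^+ \leq G$ are trivial because $G$ itself is regular, so each orbit is free. Restricting $G^+$ to one class then identifies it with a regular subgroup of $\Aut(O_n) = S_{2n-1}$, making $O_n$ a Cayley graph, which contradicts Godsil's result \cite{G} that $O_n$ is not a Cayley graph for $n \geq 3$.

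The main (and essentially only) delicate point is the identification $\Aut(DO_n) = \Aut(O_n) \times \mathbb{Z}_2$: this hinges on $O_n$ being connected and non-bipartite, so that the bipartition of $DO_n$ is canonical and there are no ``twisted'' automorphisms beyond $\Aut(O_n)$ composed with $\sigma$. Once that structural fact is in place, the rest of the argument is a short orbit-counting exercise, and the proof is essentially the contrapositive of the standard implication ``$\Gamma$ a Cayley graph $\Rightarrow$ the bipartite double of $\Gamma$ a Cayley graph'' that was recalled just before the statement.
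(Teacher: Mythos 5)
Your overall strategy is sound and genuinely different from the paper's, but it has a real gap at exactly the point you yourself flag as delicate. The identification $\Aut(DO_n)=\Aut(O_n)\times\mathbb{Z}_2$ does \emph{not} follow from $O_n$ being connected and non-bipartite. Those hypotheses only guarantee that the bipartition of the double is canonical, so that the class-preserving automorphisms form a subgroup of index $2$; they do not force every class-preserving automorphism to act in the same way on the two fibres. A class-preserving automorphism corresponds to a pair $(\alpha,\beta)$ of permutations of $V(\Gamma)$ with $u\sim v\Leftrightarrow\alpha(u)\sim\beta(v)$, and there exist connected, non-bipartite (even vertex-transitive) graphs admitting such pairs with $\alpha\neq\beta$ --- the so-called unstable graphs --- for which $\Aut$ of the bipartite double is strictly larger than $\Aut(\Gamma)\times\mathbb{Z}_2$. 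So you must either prove that $O_n$ is stable or, more simply, cite the known determination $\Aut(DO_n)=S_{2n-1}\times\mathbb{Z}_2$ for $n\geq 3$ (the doubled Odd graph is distance-transitive with this group; see \cite{BCN}). With that fact supplied, the rest of your argument is correct: $G^+$ has index $2$, preserves the two classes, hence has them as its orbits; it acts freely because $G$ does, so regularly on each class; and it projects injectively onto a regular subgroup of $\Aut(O_n)$, contradicting Godsil \cite{G}.

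For comparison, the paper's proof is shorter and avoids the automorphism group entirely: the distance-$(d-1)$ graph of $DO_n$ (with $d=2n-1$) is a disjoint union of two copies of $O_n$; by Lemma \ref{distance-i} it is again a Cayley graph $\Cay(G,S_{d-1})$, whose components are Cayley graphs over $\seq{S_{d-1}}$; hence $O_n$ would be a Cayley graph, against \cite{G}. Both routes reduce to Godsil's theorem, and yours does yield the extra structural information that a putative regular group $G$ would contain an index-$2$ regular subgroup of $S_{2n-1}$, but only at the price of knowing $\Aut(DO_n)$, which your stated justification does not establish.
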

\begin{proof}
The distance-($d-1$) graph of a doubled Odd graph $DO_n$ (with diameter $d=2n-1$) is a disjoint union of two Odd graphs $O_{n}$. If this graph is a Cayley graph, then its distance-($d-1$) graph is again a Cayley graph, by Lemma \ref{distance-i}. But an Odd graph is not a Cayley graph \cite{G}, so neither is the doubled Odd graph.
\end{proof}

Godsil's results \cite{G} also imply the classification by Sabidussi \cite{SaVTG} of Cayley graphs among the triangular graphs $T(n)$; these are Cayley graphs if and only if $n=2,3,4$ or $n \equiv 3~(\mud 4)$ and $n$ is a prime power.

\subsection{Incidence graphs of symmetric designs}\label{symmetricdesign}

Miklavi\v{c} and Poto\v{c}nik \cite{MP2} \linebreak showed that there is a correspondence between difference sets and connection sets for the incidence graphs of a symmetric design. Recall that a $k$-subset $D$ of a group $G$ of order $n$ is called a $(n,k,\lambda)$ difference set if every nonidentity element $g \in G$ occurs $\lambda$ times among all possible differences $d_1d_2^{-1}$ (we prefer to use multiplicative notation) of distinct elements $d_1$ and $d_2$ of $D$. The development $\{Dg \mid g \in G\}$ of such a difference set is a symmetric 2-$(n,k,\lambda)$ design. %We will use the following.

If $D$ is a difference set in an abelian group $G$, then we can easily construct the incidence graph of its development as a Cayley graph for the group $G \rtimes \mathbb{Z}_{2}$. The elements of this group can be (identified and) partitioned as $G \cup Gc$, where $c^2=1$ and $cgc=g^{-1}$ for all $g \in G$. As a connection set, we take $S=Dc$. It follows that $S$ is inverse closed, and that the corresponding Cayley graph is indeed the incidence graph of the development (a block $Dg$ corresponds to the group element $g^{-1}c$).

Because the Desarguesian projective plane (over $GF(q)$) is a symmetric $2$-$(q^2+q+1,q+1,1)$ design, and can be obtained from a (Singer) difference set in the cyclic group, it follows that the incidence graph of a Desarguesian projective plane is a Cayley graph. It was shown by Loz et al. \cite{LMM} that this Cayley graph is $4$-arc-transitive. We note that all projective planes of order at most $8$ are Desarguesian, and hence all incidence graphs of projective planes with valency at most $9$ are Cayley graphs.

We also note that if $D$ is a difference set in $G$, then the complement $G \setminus D$ is also a difference set in $G$, and its development is the complementary design of the development of $D$. This implies that also the incidence graph of the $2$-$(7,4,2)$ design is a Cayley graph.
Also the $2$-$(11,5,2)$ biplane comes from a difference set (the set of nonzero squares in $\mathbb{Z}_{11}$), so its incidence graph is a Cayley graph. Note that also the (trivial) $2$-$(n,n-1,n-2)$ design comes from a difference set ($D=G\setminus\{e\}$), which gives an alternative proof that $K_{n,n}^*$ is a Cayley graph (see Section \ref{complete bipartite minus matching}).

We denote the incidence graph of a 2-$(n,k,\lambda)$ design by $IG(n,k,\lambda)$. Such a graph is distance-regular with valency $k$ and diameter $3$.

\subsection{Incidence graphs of affine planes minus a parallel class of lines}\label{affineplane}

Similar to the case of symmetric designs, there is a correspondence between certain relative difference sets and connection sets for the incidence graph of an affine plane minus a parallel class of lines. A $k$-subset $R$ of a group $G$ of order $mn$ is called a relative $(m,n,k,\lambda)$ difference set relative to a subgroup $N$ of order $n$ of $G$ if every element of $G \setminus N$ occurs $\lambda$ times among all possible differences $r_1r_2^{-1}$ of elements $r_1$ and $r_2$ of $R$. The development of such a relative difference set is a so-called $(m,n,k,\lambda)$ divisible design. We will not go into the details of the definition of such a divisible design, but restrict to the remark that a $(n,n,n,1)$ divisible design is the same as an affine plane of order $n$ minus a parallel class of lines (for details, see \cite{Pott}). Similar as in Section \ref{symmetricdesign}, if such a divisible design comes from a relative difference set in an abelian group, then its incidence graph is a Cayley graph.

It is known that all Desarguesian planes correspond to relative difference sets, so the incidence graphs of the Desarguesian affine planes minus a parallel class are all Cayley graphs. These include all such distance-regular graphs with valency at most $8$. In particular, for odd prime powers $q$, the set $\{(x,x^2) \mid x \in GF(q)\}$ is a relative difference set in $GF(q)^2$. To include even prime powers, we need a more involved construction of a relative difference set that actually works also for semifields (see \cite[Thm.~4.1]{Pott}). Indeed, if $\mathbb{S}$ is a semifield of order $q$, then we define a group on $\mathbb{S}^2$ using the addition $(x_1,x_2)+(y_1,y_2)=(x_1+y_1,x_2+y_2+x_1y_1)$. In this group, the set $\{(x,x^2) \mid x \in \mathbb{S}\}$ is a relative $(q,q,q,1)$ difference set. We note that if $\mathbb{S}$ is the field on $2^n$ vertices, then the constructed group is isomorphic to $\mathbb{Z}_4^n$.

We denote the incidence graph of a the Desarguesian affine plane of order $q$ minus a parallel class of lines ($pc$) by $IG(AG(2,q)\setminus pc)$. Such a graph is distance-regular with valency $q$ and diameter $4$. We conclude the following.

\begin{prop} \label{prop:affineplane}
For every prime power $q$, the incidence graph of the Desarguesian affine plane of order $q$ minus a parallel class of lines, $IG(AG(2,q)\setminus pc)$, is a Cayley graph.
\end{prop}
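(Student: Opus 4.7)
The plan is to invoke the correspondence laid out in Section \ref{affineplane} between relative difference sets in abelian groups and Cayley graph representations of the incidence graphs of the associated divisible designs, and then exhibit, for every prime power $q$, an explicit relative $(q,q,q,1)$ difference set in some abelian group of order $q^2$ relative to a subgroup of order $q$. With the correspondence in hand, the substance of the proof reduces to producing the difference set, and this naturally splits according to the parity of $q$.

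For odd $q$ I would take $G$ to be the additive group of $GF(q)^2$ with forbidden subgroup $N=\{0\}\times GF(q)$, and set $R=\{(x,x^{2})\mid x\in GF(q)\}$. The verification is a one-line calculation: for a target $(a,b)$ with $a\neq 0$, the system $(x_{1}-x_{2},\,x_{1}^{2}-x_{2}^{2})=(a,b)$ forces $x_{1}-x_{2}=a$ and $x_{1}+x_{2}=b/a$, which has a unique solution in $GF(q)$ since $2\neq 0$. Hence every non-forbidden element of $G$ occurs exactly once as a difference, so $R$ is a relative $(q,q,q,1)$ difference set, and the correspondence then realizes $IG(AG(2,q)\setminus pc)$ as a Cayley graph over $G\rtimes\mathbb{Z}_{2}$ just as in Section \ref{symmetricdesign}.

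For even $q=2^{n}$ the previous construction collapses since squaring is additive in characteristic $2$, so I would follow the semifield construction indicated in the paper. Taking $\mathbb{S}=GF(2^{n})$, equip $\mathbb{S}^{2}$ with the twisted addition $(x_{1},x_{2})+(y_{1},y_{2})=(x_{1}+y_{1},\,x_{2}+y_{2}+x_{1}y_{1})$. I would first verify that this yields an abelian group (commutativity uses $x_{1}y_{1}=y_{1}x_{1}$ in $\mathbb{S}$, and associativity is a direct check), and that every nonzero element has order $2$ or $4$, so the group is isomorphic to $\mathbb{Z}_{4}^{n}$. Then I would check that $R=\{(x,x^{2})\mid x\in\mathbb{S}\}$ is a relative $(q,q,q,1)$ difference set with respect to $N=\{0\}\times\mathbb{S}$, computing the difference $(x_{1},x_{1}^{2})-(x_{2},x_{2}^{2})$ under the twisted operation and showing that each element outside $N$ is hit exactly once; alternatively one can invoke \cite[Thm.~4.1]{Pott} directly.

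The main obstacle is the bookkeeping in the even case: one must be careful about which group operation is being used when forming the inverse and the difference, and one must confirm commutativity and the uniqueness of solutions under the twisted operation. Once this is done, both cases feed into the same general principle from Section \ref{affineplane} and the conclusion follows without further work.
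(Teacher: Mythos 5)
Your proposal is correct and follows essentially the same route as the paper: Section \ref{affineplane} itself proves the proposition by invoking the correspondence between relative $(q,q,q,1)$ difference sets in abelian groups and Cayley representations of $IG(AG(2,q)\setminus pc)$, exhibiting $\{(x,x^2)\mid x\in GF(q)\}$ in $GF(q)^2$ for odd $q$ and the twisted-addition (semifield) construction from \cite[Thm.~4.1]{Pott} for even $q$, exactly as you do. Your explicit verification in the odd case and the noted care needed with the twisted operation in the even case are consistent with, and slightly more detailed than, the paper's treatment.
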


\subsection{Generalized polygons}\label{generalizedpolygons}
The incidence graph of a generalized quadrangle or generalized hexagon of order $(q,q)$ is distance-regular with valency $q+1$ and girth $8$ and $12$, respectively. These graphs thus arise in the tables in the following sections. In this section, we will first show, among other results, that for $q \leq 4$, none of these is a Cayley graph. Next to that, we will consider some of the distance-regular line graphs and halved graphs (point graphs) of these graphs.

Indeed, first suppose that the incidence graph $\G$ of generalized polygon of order $(s,s)$ is a Cayley graph. Then its automorphism group contains a subgroup that acts regularly on the vertices of $\G$. It follows that there is an index $2$ subgroup $G$ that acts regularly on both the point set and on the line set, as an automorphism group of the generalized polygon. This situation has been studied by Swartz \cite{Sw} for generalized quadrangles. Using results by Yoshiara \cite{Y} (who exploited an idea of Benson \cite{Benson}; cf. \cite[1.9.1]{PT}), Swartz \cite{Sw} showed that $s+1$ must be coprime to $2$ and $3$. Consequently, we have the following result.

\begin{prop} \label{IG(GQ)}
If the incidence graph of a generalized quadrangle of order $(s,s)$ is a Cayley graph, then $s+1$ is not divisible by $2$ or $3$.
\end{prop}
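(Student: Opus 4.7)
The plan is to reduce the claim to the known constraint that a generalized quadrangle of order $(s,s)$ admitting an automorphism group that acts regularly on its points must satisfy $\gcd(s+1,6)=1$. The reduction consists in extracting, from the vertex-regular group $A$ of $\Gamma = \mathrm{Cay}(A,S)$, an index-$2$ subgroup $G$ that preserves both bipartition classes and acts regularly on each, so that $G$ becomes a group of collineations of the quadrangle acting regularly on its points.

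To produce $G$, I would first note that since $\Gamma$ is bipartite with point-class $P$ and line-class $L$ of equal size, every automorphism of $\Gamma$ either preserves or swaps the two classes; this gives a homomorphism $\varphi \colon A \to \mathbb{Z}_2$. Its surjectivity is immediate, because if $A$ were contained in $\ker\varphi$ then $A$ would stabilize $P$ and $L$ setwise, contradicting the vertex-transitivity of $A$. Setting $G := \ker\varphi$ yields an index-$2$ subgroup with $|G| = |P| = |L|$. Since $G$ is a subgroup of the regular group $A$, its action on vertices is semiregular; combined with $|G| = |P|$ and the fact that $G$ preserves $P$ and $L$, this forces $G$ to act regularly on $P$ and regularly on $L$. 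In particular $G$ is a group of collineations of the generalized quadrangle acting regularly on its points.

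At this point I would invoke the theorem of Swartz~\cite{Sw}, whose proof rests on Yoshiara's~\cite{Y} adaptation of Benson's~\cite{Benson} eigenvalue-based counting of fixed points and absolute points under an automorphism of a generalized quadrangle: no generalized quadrangle of order $(s,s)$ admits a group of collineations acting regularly on its points unless $s+1$ is coprime to both $2$ and $3$. Applying this to $G$ gives the conclusion. The main difficulty is not in the reduction above, which is a short group-action exercise, but sits inside Swartz's theorem itself, which I would use as a black box rather than reprove; the only subtlety in the reduction is confirming surjectivity of $\varphi$, and that is forced by the transitivity of $A$.
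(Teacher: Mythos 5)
Your proposal is correct and follows essentially the same route as the paper: pass to the index-$2$ subgroup of the vertex-regular group that preserves the bipartition, observe it acts regularly on points (and lines), and then cite Swartz's theorem (built on Yoshiara's use of Benson's eigenvalue argument) as a black box. The paper gives exactly this reduction, only more tersely, leaving the surjectivity/semiregularity details you spell out as implicit.
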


In particular, it shows that the incidence graphs of generalized quadrangles of orders $(2,2)$ and $(3,3)$ are not Cayley graphs.

We will next derive a similar result for generalized hexagons. The line of proof is the same as for generalized quadrangles. By extracting the main ideas and fine-tuning them, we are able to give a self-contained proof, which in the end even leads to a somewhat stronger result. We note that similar more general techniques and results on generalized hexagons (but not our main results) have also been obtained by Temmermans, Thas, and Van Maldeghem \cite{TTvM}.

As in the above, we assume that the generalized hexagon of order $(s,s)$ has an automorphism group $G$ that acts regularly on points as well as on lines. Thus, the order of $G$ is $(s+1)(s^4+s^2+1)$. We start with a lemma.

\begin{lem}\label{lem:GHp} Let $p=2,3,$ or $5$, and let $g \in G$ be of order $p$. Then $x^g \neq x$ and $x^g$ is not collinear to $x$, for every point $x$.
\end{lem}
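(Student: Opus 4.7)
The plan is to give a direct geometric argument using the girth of the incidence graph, without invoking the trace/character machinery of Benson and Yoshiara. First, the statement $x^g \neq x$ is immediate: because $G$ acts regularly on the point set, no non-identity element of $G$ fixes any point, and $g$ has order $p \geq 2$. So the real content is the non-collinearity, which I would prove by contradiction.

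Suppose some point $x$ satisfies $x \sim x^g$. Since $g$ is an automorphism of the generalized hexagon, the whole $\langle g\rangle$-orbit $\{x, x^g, x^{g^2}, \ldots, x^{g^{p-1}}\}$ consists of pairwise distinct points (by regularity), and consecutive members (cyclically) are collinear. For each $i$ mod $p$, let $L_i$ denote the unique line containing $x^{g^i}$ and $x^{g^{i+1}}$; uniqueness comes from the generalized polygon axioms. Then $L_i^g = L_{i+1}$, so $\{L_0,\ldots,L_{p-1}\}$ is a $\langle g\rangle$-orbit on the line set. Since $p$ is prime, this orbit has size $1$ or $p$.

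If the orbit has size $1$, then a single line $L$ contains all $p$ points of the orbit, and $g$ fixes $L$ setwise. This contradicts the fact that $G$ acts regularly on lines, so only the identity fixes a line. If instead the orbit has size $p$ (which forces $p \geq 3$, since for $p=2$ the two ``lines'' $L_0, L_1$ are both the unique line through $\{x,x^g\}$), then the $L_i$ are pairwise distinct, and the incidence graph contains the closed walk
\[
x \;-\; L_0 \;-\; x^g \;-\; L_1 \;-\; \cdots \;-\; x^{g^{p-1}} \;-\; L_{p-1} \;-\; x
\]
of length $2p$, whose $2p$ vertices are all distinct (the $p$ points by regularity, the $p$ lines by assumption, and a point and a line are never equal in the bipartite incidence graph). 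This is a genuine $2p$-cycle, but the incidence graph has girth $12$, forcing $2p \geq 12$. For $p \in \{2,3,5\}$ we have $2p \in \{4,6,10\} < 12$, which is the desired contradiction.

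There is no serious obstacle here beyond verifying that the case analysis is exhaustive and that no vertices in the putative short cycle coincide; the proof essentially writes itself once one notices that the girth $12$ bound and the prime $p \leq 5$ combine exactly to give $2p < 12$. This also explains the shape of the hypothesis: the same argument would fail for $p = 7$ (producing only a $14$-cycle, which is allowed), so the restriction $p \in \{2,3,5\}$ is intrinsic to the method rather than an artefact.
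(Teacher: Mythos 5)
Your argument is correct and is essentially the paper's own proof: regularity rules out a $\langle g\rangle$-fixed line (the $p=2$ and ``all points on one line'' cases), and the girth-$12$ bound kills the resulting $2p$-cycle for $p=3,5$. Your uniform phrasing via the orbit $\{L_0,\dots,L_{p-1}\}$ is just a slightly tidier packaging of the paper's case-by-case treatment of orders $2$, $3$, and $5$.
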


\begin{proof} Let $x$ be an arbitrary point. Because $G$ is regular, $g$ fixes no points, and also no lines (otherwise $g=e$) so $x^g \neq x$. In order to show that $x^g$ is not collinear to $x$, we assume that $\ell$ is a line through $x$ and $x^g$, and show that this leads to a contradiction.

If $g$ has order $2$, then $\ell^g$ is a line through $x^g$ and $x^{g^2}=x$, so $\ell^g=\ell$, which is indeed a contradiction.

If $g$ has order $3$, then $x$, $x^g$, and $x^{g^2}$ are pairwise collinear. Similar as in the previous case (order $2$), these three points cannot all be on the line $\ell$, and it follows that they ``generate'' three lines $\ell$, $\ell^g$, and $\ell^{g^2}$. This however gives a $6$-cycle in the incidence graph, which is a contradiction, because its girth is $12$.

Similarly, if $g$ has order $5$, then this gives rise to a $10$-cycle in the incidence graph, which is again a contradiction.
\end{proof}

Note that the case $p=5$ seems specific for generalized hexagons, whereas the cases $2$ and $3$ clearly also apply to generalized quadrangles, because their incidence graphs have girth ``only'' $8$.

Next, we consider the adjacency matrix $A$ of the point graph of the generalized hexagon, and let $M=A+I$. Note that this matrix could also be used to obtain the results for generalized quadrangles. Our matrix $M$ has eigenvalue $s^2+s+1$ with multiplicity one (from the constant eigenvector), $2s$, $0$, and $-s$.
From an automorphism $g$ we make a permutation matrix $Q$, where $Q_{x,y}=1$ if $y=x^g$. Because $g$ is an automorphism, we have that $QA=AQ$, and hence that $QM=MQ$. Using the eigenvalues of $M$, we obtain the following lemma.

\begin{lem}\label{lem:trace} $\tr QM \equiv 1 (\mud s)$.
\end{lem}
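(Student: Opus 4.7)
The plan is to exploit the commuting relation $QM=MQ$ and to evaluate $\tr(QM)$ via the spectral decomposition of $M$. Because $Q$ and $M$ commute, every eigenspace $E_{\mu}$ of $M$ is $Q$-invariant, and decomposing $\mathbb{R}^{V}$ (where $V$ is the point set of the hexagon) according to $M$ gives
\[
\tr(QM) \;=\; \sum_{\mu} \mu\,\tr\bigl(Q|_{E_{\mu}}\bigr).
\]
The four eigenvalues of $M$ listed just before the lemma are $s^2+s+1$, $2s$, $0$, and $-s$. Three of these are divisible by $s$, while $s^2+s+1\equiv 1\,(\mud s)$, which is precisely why reducing modulo $s$ should leave only the contribution from the top eigenvalue.

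For that top eigenvalue, the eigenspace $E_{s^2+s+1}$ is one-dimensional, spanned by the all-ones vector $\mathbf{1}$; since $Q$ is a permutation matrix, $Q\mathbf{1}=\mathbf{1}$, so $\tr(Q|_{E_{s^2+s+1}})=1$ and this eigenvalue contributes exactly $s^2+s+1$ to $\tr(QM)$. Consequently,
\[
\tr(QM) - (s^2+s+1) \;=\; s\bigl(2\,\tr(Q|_{E_{2s}}) - \tr(Q|_{E_{-s}})\bigr).
\]

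The main (mild) obstacle is that a priori the traces $\tr(Q|_{E_{2s}})$ and $\tr(Q|_{E_{-s}})$ are not obviously rational integers: because $Q$ is a permutation matrix of finite order, its eigenvalues on each $Q$-invariant subspace are roots of unity, so these restricted traces are only visibly algebraic integers. I would close this gap by observing that the left-hand side of the displayed equality is a rational integer (as $Q$ and $M$ have integer entries), which forces $2\,\tr(Q|_{E_{2s}}) - \tr(Q|_{E_{-s}})$ to be a rational number that is simultaneously an algebraic integer, hence a rational integer. Combined with $s^2+s+1\equiv 1\,(\mud s)$, this yields $\tr(QM)\equiv 1\,(\mud s)$, as required.
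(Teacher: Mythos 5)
Your proof is correct and follows essentially the same Benson-type trace argument as the paper's: the constant eigenvector contributes $s^2+s+1$, and the remaining spectral contributions are integer multiples of $2s$, $0$, and $-s$, whence $\tr QM \equiv 1 \ (\mud s)$. The only cosmetic difference is that you organize the computation by restricting $Q$ to the eigenspaces of $M$ and settle the integrality issue via the rational-and-algebraic-integer observation, whereas the paper phrases it in terms of conjugate families of eigenvalues of $QM$ and delegates that detail to Benson's original argument.
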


\begin{proof} If $g$ has order $n$, then $(QM)^n=Q^nM^n=M^n$. It follows that $QM$ has the same eigenvalues as $M$, possibly multiplied by a root of unity. It has the same eigenvalue $s^2+s+1$ with multiplicity one (from the constant eigenvector) as $M$. For each other eigenvalue, also its conjugates are eigenvalues, and the sum of these is a multiple of the ``original'' eigenvalue $\theta$ of $M$ (because the sum of the relevant roots of unity is integer; for details, see the similar proof for generalized quadrangles by Benson \cite{Benson}). It follows that the sum of all eigenvalues equals $s^2+s+1$ plus integer multiples of $2s$, $0$, and $-s$. Hence $\tr QM \equiv 1 (\mud s)$.
\end{proof}

We can now prove the following.

\begin{prop} \label{IG(GH)}
If the incidence graph of a generalized hexagon of order $(s,s)$ is a Cayley graph, then $s$ is a multiple of $6$ and $s+1$ is not divisible by $5$.
\end{prop}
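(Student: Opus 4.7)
The plan is to combine Lemmas \ref{lem:GHp} and \ref{lem:trace} via Cauchy's theorem to rule out elements of order $2$, $3$, or $5$ in $G$, and then translate these divisibility restrictions on $|G|=(s+1)(s^4+s^2+1)$ into the claimed congruence conditions on $s$. As in the setup preceding Lemma \ref{lem:GHp}, the Cayley property gives an index-$2$ subgroup $G$ acting regularly on both points and lines of the generalized hexagon.

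First, for each $p\in\{2,3,5\}$, I would suppose for contradiction that $p$ divides $|G|$. By Cauchy's theorem, $G$ contains an element $g$ of order $p$. Letting $Q$ be the associated permutation matrix on the point set, I compute
\[
(QM)_{x,x}=\sum_y Q_{x,y}M_{y,x}=M_{x^g,x}.
\]
By Lemma \ref{lem:GHp}, the point $x^g$ is neither equal nor collinear to $x$, so $M_{x^g,x}=0$ for every $x$, giving $\tr QM=0$. But Lemma \ref{lem:trace} yields $\tr QM\equiv 1\pmod{s}$, which forces $s\mid 1$ and hence $s=1$, contradicting $s\geq 2$ for a (thick) generalized hexagon of order $(s,s)$. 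Therefore $|G|$ must be coprime to $30$.

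It then remains to extract the arithmetic content by reducing $(s+1)(s^4+s^2+1)$ modulo each of $2$, $3$, $5$. Since $s^4+s^2=s^2(s^2+1)$ is always even, $s^4+s^2+1$ is always odd, so $2\nmid|G|$ forces $s+1$ odd, i.e., $s$ even. Modulo $3$, a direct check shows $s^4+s^2+1\equiv 0\pmod 3$ precisely when $s\not\equiv 0\pmod 3$, so together with the factor $s+1$ one obtains $3\nmid|G|$ iff $s\equiv 0\pmod 3$, giving $3\mid s$ and hence $6\mid s$. Finally, using $s^4\equiv 1\pmod 5$ whenever $\gcd(s,5)=1$, a short case analysis shows $s^4+s^2+1$ is never divisible by $5$, so $5\nmid|G|$ is equivalent to $5\nmid s+1$. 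The main conceptual obstacle, already resolved by Lemmas \ref{lem:GHp} and \ref{lem:trace}, is producing a trace computation with two incompatible values; once that is in place the proof reduces to the bookkeeping above.
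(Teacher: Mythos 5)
Your proposal is correct and follows essentially the same route as the paper: Cauchy's theorem produces an element of order $p\in\{2,3,5\}$ whenever $p$ divides $|G|=(s+1)(s^4+s^2+1)$, Lemma \ref{lem:GHp} forces $\tr QM=0$, and this contradicts Lemma \ref{lem:trace}; the concluding modular arithmetic (including the observations that $s^4+s^2+1$ is always odd and never divisible by $5$) is the same bookkeeping the paper leaves implicit. Your explicit remark that the contradiction only bites for $s\geq 2$ is a small but welcome refinement over the paper's phrasing.
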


\begin{proof} Suppose that the incidence graph is a Cayley graph, and that $(s+1)(s^4+s^2+1)$ is divisible by $2$, $3$, or $5$. Then the generalized hexagon has a regular group $G$ of automorphisms, acting regularly on both the point set and the line set. Because the order of this group is divisible by $2$, $3$, or $5$, there is an automorphism $g \in G$ of order $2$, $3$, or $5$. By Lemma \ref{lem:GHp}, $x^g \neq x$ and $x^g$ is not collinear to $x$, for every point $x$. It follows that both $Q$ and $QA$ have zero diagonal, hence $\tr QM=0$. But this contradicts Lemma \ref{lem:trace}, hence $(s+1)(s^4+s^2+1)$ is not divisible by $2$, $3$, or $5$, and this implies that $s$ is a multiple of $6$ and $s+1$ is not divisible by $5$.
\end{proof}

Because generalized hexagons of order $(s,s)$ are only known for prime powers $s$, it follows that all
the incidence graphs of the known generalized hexagons are not Cayley graphs. Note that automorphisms of a putative generalized hexagon of order $(6,6)$ have been studied by Belousov \cite{Belousov}.

Similarly, generalized quadrangles of order $(s,s)$ are only known for prime powers
$s$. Among these known ones, Proposition \ref{IG(GQ)} thus rules out all $s$ except $s=4^i$ (for
$i \in \mathbb{N}$). Among the distance-regular incidence graphs of generalized polygons with
valency at most $5$, we still need to consider the incidence graph of the generalized
quadrangle of order $(4,4)$. For this, we also consider one of the halved graphs, i.e.,
the collinearity (or point) graph.

\begin{prop} \label{IG(GQ(4,4))}
The incidence graph of the generalized quadrangle $GQ(4,4)$ is not a Cayley graph.
\end{prop}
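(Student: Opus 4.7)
The plan is to rule out the graph by a halved-graph argument, reducing the problem to the nonexistence of a strongly regular circulant with parameters $(85,20,3,5)$.

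Suppose for contradiction that the incidence graph $\Gamma$ of $GQ(4,4)$ is a Cayley graph $\Cay(H,S)$. Then $|H|=2(s+1)(s^2+1)=170$. Since $\Gamma$ is bipartite, the subgroup $G\le H$ generated by the even-length words in $S$ has index $2$, so $|G|=85=5\cdot 17$; by Sylow, both the $5$-Sylow and the $17$-Sylow of $G$ are normal, hence $G\cong \mathbb{Z}_{85}$. By Lemma \ref{distance-i}, one of the halved graphs of $\Gamma$ is a Cayley graph over $G$; but this halved graph is precisely the point (collinearity) graph of $GQ(4,4)$, which is strongly regular with parameters $(85,20,3,5)$ and spectrum $\{20^1,3^{64},(-5)^{20}\}$. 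So it suffices to show that no Cayley graph on $\mathbb{Z}_{85}$ has these parameters.

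For this I would use the integrality of the spectrum. For a Cayley graph $\Cay(\mathbb{Z}_{n},T)$, the eigenvalues are the character sums $\sum_{t\in T}\omega^{jt}$, and the Galois group $\mathrm{Gal}(\mathbb{Q}(\omega)/\mathbb{Q})\cong(\mathbb{Z}/n)^{*}$ acts as $\sigma_{a}:\theta_{j}\mapsto\theta_{aj}$. All eigenvalues being rational integers therefore forces $T$ to be invariant under multiplication by every element of $(\mathbb{Z}/n)^{*}$, i.e.\ $T$ is a union of orbits of $(\mathbb{Z}/85)^{*}$ on $\mathbb{Z}_{85}$. Via CRT $\mathbb{Z}_{85}\cong\mathbb{Z}_{5}\times\mathbb{Z}_{17}$, these orbits are $\{0\}$, the $4$ nonzero multiples of $17$, the $16$ nonzero multiples of $5$, and the $64$ units. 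The only union of these of size $20$ is the set of nonzero zero-divisors.

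The corresponding Cayley graph is $K_{5}\,\square\, K_{17}$, the $5\times 17$ rook graph: two adjacent vertices sharing the first coordinate have $17-2=15$ common neighbours, while two adjacent vertices sharing the second coordinate have $5-2=3$ common neighbours. Hence $\lambda$ is not constant, so this graph is not strongly regular, and in particular has parameters different from $(85,20,3,5)$. This contradicts the previous paragraph, so $\Gamma$ is not a Cayley graph. The main point to be careful about is justifying that integer eigenvalues force $S$ to be a union of $(\mathbb{Z}/85)^{*}$-orbits; as an alternative one could simply cite the Miklavi\v{c}--Poto\v{c}nik classification of distance-regular Cayley graphs over cyclic groups \cite{MP1}, none of whose members has the parameters of a $(85,20,3,5)$ strongly regular graph.
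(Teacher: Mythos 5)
Your proof is correct, and it follows the paper's reduction exactly up to the final step: pass to a halved graph, which is the collinearity graph of $GQ(4,4)$, a strongly regular graph with parameters $(85,20,3,5)$ that would have to be a Cayley graph over the unique group of order $85$, namely $\mathbb{Z}_{85}$. Where you diverge is in deriving the contradiction. The paper uses the geometry of the generalized quadrangle: in an abelian group each line (a $5$-clique) through the identity is forced to be a subgroup, and $\mathbb{Z}_{85}$ has only one subgroup of order $5$ while there are five lines through each point. You instead use the integrality of the spectrum of a circulant: rational eigenvalues force the connection set to be a union of the orbits $G_{85}(d)=\{x:\gcd(x,85)=d\}$ of $(\mathbb{Z}/85)^{*}$, the only size-$20$ candidate is the set of nonzero zero-divisors, and the resulting graph $K_5\,\square\,K_{17}$ is not strongly regular since $\lambda$ takes the values $3$ and $15$. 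This is a clean and fully rigorous alternative; the one step you rightly flag as needing care (that rationality of all $\theta_j$ transfers from constancy on orbits of indices to invariance of $T$ under multiplication by units) follows by Fourier inversion of the indicator of $T$ and is the standard Bridges--Mena/So characterization of integral circulants; citing \cite{MP1} instead, as you suggest, is also legitimate and is in the spirit of how the paper uses that classification elsewhere. Two minor corrections: the multiplicities in the spectrum of the $(85,20,3,5)$ graph are $3^{50}$ and $(-5)^{34}$, not $3^{64}$ and $(-5)^{20}$ (irrelevant to your argument, which only needs integrality), and the relative merits are that the paper's finish is shorter but leans on GQ-specific structure, while yours is more mechanical and would apply verbatim to any putative integral strongly regular circulant on $85$ vertices.
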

\begin{proof}
Suppose that this bipartite graph $\Gamma$ is a Cayley graph. By Lemma \ref{distance-i}, its
halved graphs are also Cayley graphs. These halved graphs (one of them being the collinearity
graph of the generalized quadrangle) are again distance-regular, with intersection
array $\{20,16;1,5\}$ \cite[Proposition~4.2.2]{BCN}. In other words, it is a strongly
regular graph with parameters $(85,20,3,5)$. By Sylow's
theorem, the only group of order 85 is the cyclic group $\mathbb{Z}_{85}$. Using the properties
of a generalized quadrangle and that the cyclic group is abelian, it is easy to show
that each line (a 5-clique) through $e$ forms a subgroup of $\mathbb{Z}_{85}$, but there is only one
such subgroup, which gives a contradiction, because there are $5$ lines through each point.
\end{proof}

We note that this result also follows from more extensive results by Bamberg and Giudici \cite[Thm.~1.1]{BG} and by Swartz \cite[Thm.~1.3]{Sw}.
We remark that also the result that Tutte's $8$-cage --- the incidence graph of the unique generalized quadrangle of order $(2,2)$ --- is not a Cayley graph, can be obtained using the point graph. The latter is the complement of the triangular graph $T(6)$. Sabidussi \cite{SaVTG} determined the Cayley graphs among the triangular graphs (see also Section \ref{Oddgraph}), and $T(6)$ is not one of them. Thus, Tutte's $8$-cage, also known as the Tutte-Coxeter graph, is not a Cayley graph.

Also Tutte's 12-cage --- the unique incidence graph of a generalized hexagon of order $(2,2)$ --- is not a Cayley graph for an elementary reason, i.e., because it is not vertex-transitive. Note that there are two generalized hexagons of order $(2,2)$, and these are dual, but not isomorphic, to each other. Thus, there are two orbits of vertices in the incidence graph.

We note that similarly there are precisely two generalized quadrangles of order $(3,3)$, and these are dual to each other. This implies that the corresponding incidence graph is not vertex-transitive, and hence this gives another argument for why this graph is not a Cayley graph.

Another argument for why Tutte's 12-cage is not a Cayley graph is obtained by considering the point graphs of the two generalized hexagons of order $(2,2)$. These distance-regular graphs have intersection array $\{6,4,4;1,1,3\}$ and automorphism group $PSU(3,3) \rtimes \mathbb{Z}_{2}$ \cite{drgorg}. If such a graph would be a Cayley graph $Cay(G,S)$, then $G$ must be a subgroup of order $63$ of the above group. Moreover, because the graph has no $4$-cycles, the group must be nonabelian by Lemma \ref{abelian}. However, we checked with {\sf GAP} \cite{GAP} that there are no such subgroups, so we conclude that these graphs are not Cayley graphs. A similar argument applies to the line graph of Tutte's 12-cage, the unique distance-regular graph with intersection array $\{4,2,2,2,2,2;1,1,1,1,1,2\}$. Also this graph has automorphism group $PSU(3,3) \rtimes \mathbb{Z}_{2}$ \cite{drgorg} and no $4$-cycles. Thus, after having checked that there are no nonabelian subgroups of order $189$, we conclude the following.

\begin{prop} \label{CL(GD(2,1))}
The line graph of Tutte's $12$-cage and the point graphs of the two generalized hexagons of order $(2,2)$ are not Cayley graphs.
\end{prop}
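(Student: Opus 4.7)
The plan is to follow the strategy foreshadowed in the paragraph preceding the statement. For each of the three graphs I would first read off the vertex count from the intersection array: both point graphs (with array $\{6,4,4;1,1,3\}$) have $1+6+24+32=63$ vertices, while the line graph of Tutte's $12$-cage (with array $\{4,2,2,2,2,2;1,1,1,1,1,2\}$) has $1+4+8+16+32+64+64=189$ vertices. A Cayley representation of any of these graphs would then require a regular subgroup of the corresponding order inside the full automorphism group, which in every case is $PSU(3,3) \rtimes \mathbb{Z}_2$ of order $12096$, as recorded in \cite{drgorg}.

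Next, I would eliminate abelian candidates uniformly. Both intersection arrays have $c_2=1$, so the graphs have girth at least $5$ and in particular contain no $4$-cycle. Since the valencies $6$ and $4$ both exceed $2$, Lemma \ref{abelian} immediately forces any regular subgroup of $\Aut(\Gamma)$ to be nonabelian. This reduces the problem to showing that $PSU(3,3) \rtimes \mathbb{Z}_2$ has no nonabelian subgroup of order $63$ and no nonabelian subgroup of order $189$.

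The main substantive step is this last nonexistence claim. Since $63=9\cdot 7$ and $189=27\cdot 7$ both divide $12096$, elementary divisibility and Sylow arguments do not obviously preclude such subgroups, so I would invoke {\sf GAP} \cite{GAP} to enumerate, up to conjugacy, all subgroups of orders $63$ and $189$ in $PSU(3,3) \rtimes \mathbb{Z}_2$ and verify that each is abelian. This is where the difficulty lies, but only in the sense of requiring an external computation; structurally it is a finite check. Once completed, the absence of a regular subgroup (abelian ones excluded by Lemma \ref{abelian}, nonabelian ones excluded by the computation) shows that none of the three graphs is a Cayley graph.

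A final remark on bookkeeping: although the two generalized hexagons of order $(2,2)$ are not isomorphic, their point graphs share the intersection array $\{6,4,4;1,1,3\}$ and the same automorphism group $PSU(3,3) \rtimes \mathbb{Z}_2$, so a single {\sf GAP} check on subgroups of order $63$ handles both point graphs simultaneously, and a separate check on subgroups of order $189$ handles the line graph of the $12$-cage.
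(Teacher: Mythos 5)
Your proposal follows essentially the same route as the paper: count the vertices ($63$ and $189$), note that a Cayley representation requires a regular subgroup of that order in $\Aut(\Gamma)=PSU(3,3)\rtimes\mathbb{Z}_2$, rule out abelian candidates via Lemma \ref{abelian} because the graphs have no $4$-cycles, and dispose of nonabelian candidates by a {\sf GAP} enumeration. One slip in the justification: these graphs have girth $3$, not girth at least $5$ --- both intersection arrays give $a_1=1>0$, so the graphs contain triangles --- and $c_2=1$ alone does not exclude $4$-cycles, only chordless ones; the correct statement is that $c_2=1$ rules out $4$-cycles whose opposite vertices are at distance $2$, while $a_1=1$ rules out $4$-cycles with a chord, and together these give the ``no $4$-cycles'' hypothesis needed for Lemma \ref{abelian}. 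Since both arrays do satisfy $a_1=c_2=1$, this does not affect the validity of the argument.
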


Similarly, we can show that the unique distance-regular graph with intersection array $\{6,3,3;1,1,2\}$, the line graph of the incidence graph of the projective plane (generalized $3$-gon) of order $3$ is not a Cayley graph. Indeed, the automorphism group of the incidence graph (and hence of its line graph) is $PSL(3,3) \rtimes \mathbb{Z}_{2}$, and we checked again with {\sf GAP} \cite{GAP} that it has no subgroups of order $52$. We recall from Section \ref{symmetricdesign} that the incidence graph itself is a Cayley graph. We had already observed in \cite[Thm.~5.8]{ADJ} that if the line graph of the incidence graph of a projective plane of small odd order is a Cayley graph, then it should come from a group of both collineations and correlations of the projective plane.

\begin{prop} \label{LinePP3}
The line graph of the incidence graph of the projective plane of order $3$ is not a Cayley graph.
\end{prop}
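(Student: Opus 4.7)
The plan is to follow the same template used in Proposition \ref{CL(GD(2,1))}. If the line graph $\Gamma$ of the incidence graph of $PG(2,3)$ were a Cayley graph $\Cay(G,S)$, then $G$ would act regularly on the $52$ vertices of $\Gamma$, so $G$ would embed as a subgroup of order $52=2^2\cdot 13$ in $\Aut(\Gamma)=PSL(3,3)\rtimes\mathbb{Z}_2$, a group of order $11232$. The whole proof reduces to showing that no such subgroup exists.

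To do this by hand (rather than by appealing to \textsf{GAP}), I would argue via Sylow. Write $P_{13}$ for a Sylow $13$-subgroup of $PSL(3,3)$. Since $|PSL(3,3)|=2^4\cdot 3^3\cdot 13$ and $PSL(3,3)$ is simple, the Sylow $13$-subgroups are not normal. A standard fact (or a short count of elements of order $13$) identifies $N_{PSL(3,3)}(P_{13})$ as a Frobenius group of order $39$. Now suppose $H\le PSL(3,3)\rtimes\mathbb{Z}_2$ has order $52$. Then $H_0:=H\cap PSL(3,3)$ has order $26$ or $52$. In either case, a Sylow count inside $H_0$ shows its unique Sylow $13$-subgroup is normal in $H_0$, so $H_0\le N_{PSL(3,3)}(P_{13})$, which has order $39$ and is therefore incompatible with $|H_0|\in\{26,52\}$. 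This contradiction would complete the proof.

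The main obstacle is pinning down the order of $N_{PSL(3,3)}(P_{13})$; once that is $39$ (in particular coprime to $2$), the rest is forced. If one prefers to avoid citing the subgroup structure of $PSL(3,3)$, the fallback is the direct \textsf{GAP} verification used by the authors in the preceding propositions. I would then close by noting the contrast with Section \ref{symmetricdesign}: the incidence graph itself is a Cayley graph, but passing to its line graph destroys this property, in line with the observation recalled from \cite[Thm.~5.8]{ADJ} that any Cayley representation of such a line graph would have to come from a group of collineations \emph{and} correlations of the plane.
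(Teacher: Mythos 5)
Your proposal is correct and follows the same overall reduction as the paper: if $\Gamma$ were a Cayley graph, a regular group $G$ of order $52$ would sit inside $\Aut(\Gamma)=PSL(3,3)\rtimes\mathbb{Z}_2$, and the whole issue is whether such a subgroup exists. The paper settles that single point by a {\sf GAP} computation and says nothing more; you replace the computation with a hand Sylow argument, and that argument checks out. Indeed, $H_0=H\cap PSL(3,3)$ has index at most $2$ in $H$, so $|H_0|\in\{26,52\}$; in either case $n_{13}(H_0)$ divides $4$ and is $\equiv 1\pmod{13}$, so the Sylow $13$-subgroup of $H_0$ is normal in $H_0$, and since $13$ exactly divides $|PSL(3,3)|=2^4\cdot 3^3\cdot 13$ it is a full Sylow $13$-subgroup of $PSL(3,3)$; hence $H_0$ lies in its normalizer, which is the Frobenius group $13{:}3$ of order $39$ (the Singer-cycle normalizer, a maximal subgroup of $L_3(3)$), and $2\nmid 39$ gives the contradiction. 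The only external input is the order of that normalizer, which you correctly flag as the one step needing justification; it is standard (ATLAS, or the Schur-lemma computation of the centralizer of a Singer cycle), and one could also note that $n_{13}$ must be a divisor of $432$ congruent to $1\bmod 13$ and that $n_{13}=1$ is excluded by simplicity and $n_{13}=27$ by the argument you give applied to the resulting normalizer of order $208$. The trade-off is clear: the paper's verification is a one-line computer check, while yours is self-contained modulo a classical fact about $L_3(3)$, which is arguably preferable in print. Your closing remark about the contrast with Section \ref{symmetricdesign} and \cite[Thm.~5.8]{ADJ} matches what the paper itself observes.
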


We next consider the line graph of the incidence graph of the generalized quadrangle of order $(3,3)$.

\begin{prop} \label{LineGQ3}
The line graph of the incidence graph of the generalized quadrangle of order $(3,3)$ is not a Cayley graph.
\end{prop}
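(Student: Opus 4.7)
The plan follows the strategy of Propositions~\ref{CL(GD(2,1))} and~\ref{LinePP3}: identify the full automorphism group of $\G$, and then verify with \textsf{GAP} that no subgroup of the requisite order acts regularly on the vertex set. Let $\G$ denote the line graph of the incidence graph of GQ$(3,3)$. Then $\G$ has $(3+1)^{2}(3^{2}+1)=160$ vertices (indexed by the flags), is $6$-regular, and has girth $3$; each vertex lies in exactly two maximal $4$-cliques, one consisting of the flags at a common point and the other of the flags on a common line. The two generalized quadrangles of order $(3,3)$, namely $W(3)$ and $Q(4,3)$, are dual but non-isomorphic, so the incidence graph admits no automorphism exchanging points and lines, and by Whitney's theorem $\Aut(\G)=\Aut(\mathrm{IG}(GQ(3,3)))=\Aut(W(3))$, of order $51840$. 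Hence if $\G=\Cay(G,S)$ then $G$ is a subgroup of order $160$ of $\Aut(W(3))$ that acts regularly on the set of $160$ flags.

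Before invoking the computer, I would reduce the cases. Such a $G$ acts transitively on the $40$ points with point-stabilizer $G_p$ of order $4$, and by flag-regularity $G_p$ must act freely on the four flags at $p$, equivalently regularly on the four lines through $p$. If $G$ were abelian, then $G_{p'}=g^{-1}G_pg=G_p$ for every $p'=p^g$ in the single $G$-orbit, so $G_p$ would fix every point; since two distinct points of a generalized quadrangle lie on at most one common line, any element fixing every point must fix every line as well, contradicting the free action of $G_p$ on the lines through $p$. By Proposition~\ref{dihedralclassification}, $G$ is not dihedral either, since $\G$ is neither a cycle, a complete (multipartite) graph, nor the bipartite incidence graph of a symmetric design. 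It then suffices to check in \textsf{GAP} \cite{GAP} that no subgroup of order $160$ in $\Aut(W(3))$ acts regularly on the flag set, which rules out the remaining possibilities.

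The main obstacle is that the Benson-type trace argument from Proposition~\ref{IG(GH)} cannot be invoked directly here. For a flag-regular $G$, the induced action on the $40$ points is transitive but not regular; an involution in $G$ may have several fixed points, so one cannot conclude that $\tr(QM)=0$ and derive a contradiction modulo a small prime. A fully conceptual proof therefore appears out of reach, and the argument ultimately relies on the computational verification in \textsf{GAP}.
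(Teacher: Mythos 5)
Your strategy --- pin down $\Aut(\G)$ and then search it computationally for a regular subgroup of order $160$ --- is genuinely different from the paper's, and the facts you assemble along the way are correct: $\G$ has $160$ vertices; since $W(3)$ and $Q(4,3)$ are dual but non-isomorphic, the incidence graph admits no duality, so by Whitney's theorem $\Aut(\G)$ is the collineation group of $W(3)$, of order $51840$; and your eliminations of the abelian case (conjugate point-stabilizers coincide, hence fix every point and therefore every line) and of the dihedral case (Proposition \ref{dihedralclassification}) are sound. The difficulty is that these reductions do essentially none of the work: the proof stands or falls with the assertion that $\Aut(W(3))$ has no subgroup of order $160$ acting regularly on the flags, and you never carry out or report that computation --- you only announce that it ``rules out the remaining possibilities.'' Since $51840=2^7\cdot 3^4\cdot 5$ and $160=2^5\cdot 5$, such subgroups are not excluded by any Lagrange or Sylow consideration, so an explicit search (or a further argument) is unavoidable. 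As written, the decisive step is a promissory note, and the proof is incomplete; if the \textsf{GAP} search were actually performed and returned the expected answer, the argument would be valid.

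For comparison, the paper's proof never touches the subgroup lattice of $\Aut(W(3))$ and keeps the computer algebra to a minimum. It first shows, via flag-transitivity and a Benson-type count as in Lemma \ref{lem:trace} (precisely the tool you set aside as inapplicable), that every involution of a flag-regular $G$ fixes a point or a line; this forces a Sylow $2$-subgroup to meet $Z(G)$ trivially, whence $|Z(G)|\in\{1,5\}$, and \textsf{GAP} is used only to list the abstract groups of order $160$ with such a center, leaving the single candidate $(\mathbb{Z}_2^4\rtimes\mathbb{Z}_5)\rtimes\mathbb{Z}_2$. The contradiction is then combinatorial: an equitable-partition/eigenvalue argument determines $|S\cap N|$ for the index-$2$ normal subgroup $N$, Theorem \ref{lineHK} forces $S=(K_1\cup K_2)\setminus\{e\}$ for two subgroups of order $4$, and two commuting involutions of $S\cap N$ give $e$ and their product two common neighbours at distance $2$, contradicting $c_2=1$. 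So your closing claim that a conceptual proof ``appears out of reach'' is too pessimistic --- the paper's argument is largely conceptual --- and your own route, while viable in principle, trades that structure for a heavier computation that you have not actually done.
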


\begin{proof} Suppose that this graph $\G$ is a Cayley graph $Cay(G,S)$. Then $G$ is a subgroup of the automorphism group of the incidence graph of the generalized quadrangle that acts regularly on its $160$ flags. It follows that $G$ acts transitively on the point set $P$ and on the line set $L$. Hence $|G_{x}|=|G_{\ell}|=4$ for every $x \in P$ and $\ell \in L$. This implies that for every point (and similarly, for every line), there is an involution in $G$ that fixes it. On the other hand, it is not hard to show that every involution in $G$ fixes either a point or a line, using Benson's results \cite{Benson} or the approach as in Lemma \ref{lem:trace} (see also \cite[Lemma 3.4]{flagquad}).

Now let $H$ be a Sylow $2$-subgroup of $G$. We claim that the intersection of $Z(G)$ and $H$ is trivial. To show this, assume that it is not. Then $H \cap Z(G)$ contains an involution $\sigma$, say, and suppose without loss of generality that $\sigma$ fixes a point $x$, say. Let $\ell$ be a line through $x$ and let $\theta$ be an involution that fixes $\ell$. If $y=x^{\theta}$, then it is easy to see that $\sigma$ also fixes $y$, and hence $\ell$. But then it fixes a flag $(x,\ell)$, which is a contradiction.

Because $Z(G)$ is normal in $G$, it follows that $HZ(G)$ is a subgroup of $G$, with $|HZ(G)|=|H||Z(G)|$. This implies that $|Z(G)|=1$ or $5$. We checked with {\sf GAP} \cite{GAP} that there is no group of order $160$ with $|Z(G)|=5$ and there exists only one group $G$ of order $160$ such that $|Z(G)|=1$; this group is $(\mathbb{Z}_2^4 \rtimes \mathbb{Z}_5) \rtimes \mathbb{Z}_2$.

Now $G$ has a normal subgroup $N=\mathbb{Z}_2^4 \rtimes \mathbb{Z}_5$ of index $2$, and this group does not have any dihedral subgroup, except the ones of order $2$ and $4$. Moreover, the two cosets of $N$ induce an equitable partition of the graph, with quotient matrix of the form $$\begin{bmatrix}m&6-m\\6-m&m\end{bmatrix},$$ with $m=|S \cap N|$. This implies that $\G$ must have an eigenvalue $2m-6$ (besides eigenvalue $6$) and because the integer eigenvalues of $\G$ are $6$, $2$, and $-2$, it follows that $m=2$ or $m=4$.

By Theorem \ref{lineHK} and the fact that $G$ only has elements of orders $1,2,4$, and $5$, it follows that $S=(K_1\cup K_2)\setminus \{e\}$, where $K_1$ and $K_2$ are subgroups of $G$ of order $4$ such that $K_1 \cap K_2 =\{e\}$.

In both the cases $m=2$ and $m=4$, it follows that $S \cap N$ contains involutions $s_1\in K_1$ and $s_2\in K_2$. These two involutions generate a dihedral subgroup of $N$, which implies that this must be the dihedral group of order $4$. But then $s_1$ and $s_2$ commute, and it is clear that $e$ and $s_1s_2$ have at least two common neighbors, while being at distance $2$, and we have a contradiction.
\end{proof}

The last case we will handle in this section is that of the line graph of the incidence graph of a generalized hexagon of order $(3,3)$. Note that it is currently unknown how many such generalized hexagons there are.

\begin{prop} \label{LineGH3}
The line graph of the incidence graph of a generalized hexagon of order $(3,3)$ is not a Cayley graph.
\end{prop}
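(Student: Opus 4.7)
My plan is to mimic, step by step, the argument of Proposition \ref{LineGQ3}, replacing $GQ(3,3)$ by $GH(3,3)$. Assume $\Gamma=\Cay(G,S)$. Since $GH(3,3)$ has $(3+1)(3^4+3^2+1)=364$ points (and $364$ lines), the number of flags is $4\cdot 364=1456=2^{4}\cdot 7\cdot 13$, so $|G|=1456$. As a subgroup of the automorphism group of the incidence graph, $G$ acts regularly on flags, hence transitively on points and on lines with stabilisers $G_{x},G_{\ell}$ of order~$4$. Each such stabiliser contains at least one involution, so every point and every line of the hexagon is fixed by some involution of~$G$.

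The first real step is to prove the converse: every involution $g\in G$ fixes a point or a line. Here I combine the order-$2$ case of Lemma \ref{lem:GHp} with Lemma \ref{lem:trace}. If $g$ fixes no line, then no point $x$ can be collinear to $x^{g}$ (the unique line through $x$ and $x^{g}$ would be $g$-invariant); so with $Q$ the permutation matrix of $g$ acting on points and $M=A_{P}+I$, one has $\tr Q=\tr QA_{P}=0$, hence $\tr QM=0$, which contradicts $\tr QM\equiv 1\pmod 3$ from Lemma \ref{lem:trace}. The symmetric statement for lines is obtained the same way.

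Second, I copy verbatim the Sylow/centre argument of Proposition \ref{LineGQ3}. A Sylow $2$-subgroup $H$ has order~$16$; for any involution $\sigma\in H\cap Z(G)$ that fixes, say, a point $x$, take a line $\ell$ through $x$ and an involution $\theta$ fixing~$\ell$, and set $y=x^{\theta}\in\ell$. Then $y^{\sigma}=x^{\theta\sigma}=x^{\sigma\theta}=y$, and both $y=x$ (then $\theta$ fixes the flag $(x,\ell)$) and $y\neq x$ (then $\sigma$ fixes two points of $\ell$, hence $\ell$, and so the flag $(x,\ell)$) contradict flag-regularity of~$G$. Thus $H\cap Z(G)=\{e\}$, and since $Z(G)$ is normal, $|HZ(G)|=16|Z(G)|$ divides~$1456$, forcing $|Z(G)|\in\{1,7,13,91\}$.

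The last and hardest step is to rule out each of the finitely many groups $G$ of order $1456$ with $|Z(G)|\in\{1,7,13,91\}$; for this I would use \textsf{GAP} \cite{GAP} to enumerate the candidates from the SmallGroups library. For every surviving candidate two tools should suffice, exactly as in Proposition \ref{LineGQ3}: (i) the equitable partition induced by a proper normal subgroup~$N$, whose quotient matrix must have eigenvalues among the eigenvalues of $\Gamma$ (in particular its integer eigenvalues $6,\pm 2,\ldots$), and which therefore pins $|S\cap N|$ down to very few values; and (ii), whenever no element of~$S$ has even order at least~$12$ (so the hypothesis of Theorem \ref{lineHK} is vacuously satisfied), the decomposition $S=(K_{1}\cup K_{2})\setminus\{e\}$ with $|K_{1}|=|K_{2}|=4$ and $K_{1}\cap K_{2}=\{e\}$. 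Since each $K_{i}$ contains an involution, one obtains commuting involutions $s_{1}\in K_{1}$ and $s_{2}\in K_{2}$ (their dihedral hull inside a small normal subgroup is forced to have order $4$), and then $s_{1},s_{2}$ are two distinct common neighbours of $e$ and $s_{1}s_{2}$ in $\Gamma$, contradicting $c_{2}=1$ for the line graph of the incidence graph of a generalized hexagon. The main obstacle is this final step: it relies on a finite but nontrivial case analysis of groups of order $1456$ and, where Theorem \ref{lineHK} is invoked, on separately checking that the orders of the elements of $S$ stay below $12$.
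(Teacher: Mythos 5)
Your setup ($|G|=1456$, flag-regularity, the involution/Benson argument via Lemmas \ref{lem:GHp} and \ref{lem:trace}, the Sylow/centre computation forcing $|Z(G)|$ to divide $91$, and the {\sf GAP} reduction to candidate groups) is exactly the paper's route; the paper indeed concludes from this that $G$ must be the single group $(\mathbb{Z}_2^3 \rtimes \mathbb{Z}_7) \times D_{26}$.

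The gap is in your endgame. Your tool (ii) hinges on Theorem \ref{lineHK} giving $S=(K_1\cup K_2)\setminus\{e\}$, which for $d=6$ requires $\seq{a}\subseteq S\cup\{e\}$ for every $a\in S$ of order $2i$ with $i\geq 6$ --- and you only plan to invoke it ``whenever no element of $S$ has even order at least $12$.'' But the surviving group $(\mathbb{Z}_2^3 \rtimes \mathbb{Z}_7) \times D_{26}$ has elements of orders $14$, $26$, and $182$, and the paper in fact \emph{proves} that $S$ must contain at least two elements of order $26$: for the index-$2$ normal subgroup $N=(\mathbb{Z}_2^3 \rtimes \mathbb{Z}_7)\times\mathbb{Z}_{13}$ one gets $|S\cap N|\in\{2,4\}$, the even number of (pairwise commuting) involutions in $S\cap N$ must be zero because $\G$ has no induced $4$-cycles, and the remaining possible orders $7,13,91$ are excluded because $\G$ has no induced odd cycles other than triangles --- leaving only order $26$. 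Since $|S|=6<25$, the cyclic group generated by such an element cannot lie in $S\cup\{e\}$, so by Theorem \ref{lineHK} the decomposition $S=(K_1\cup K_2)\setminus\{e\}$ is actually \emph{unavailable}, and your commuting-involutions contradiction never gets off the ground. The paper closes the argument differently: it passes to the index-$7$ normal subgroup $K=\mathbb{Z}_2^3\times D_{26}$ (which contains all order-$26$ elements, so $|S\cap K|\geq 2$), observes that the $7\times 7$ quotient matrix is symmetric circulant with only three possible first rows, and notes that each of these has irrational eigenvalues of degree $3$ (roots of $x^3+x^2-2x-1$, coming from the $7$-cycle) that are not eigenvalues of $\G$. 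You would need to supply some such replacement for step (ii); as written, your plan has no argument for the case that actually occurs.
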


\begin{proof}
Suppose that this graph $\G$ is a Cayley graph $Cay(G,S)$. Then by the same approach as in the proof of Proposition \ref{LineGQ3}, it follows that $G=(\mathbb{Z}_2^3 \rtimes \mathbb{Z}_7) \times D_{26}$. Again, $G$ has a normal subgroup $N=(\mathbb{Z}_2^3 \rtimes \mathbb{Z}_7) \times \mathbb{Z}_{13}$ of index $2$, and from the eigenvalues of $\G$, we obtain that $m=2$ or $m=4$, where $m=|S \cap N|$.

Observe that $N$ contains seven involutions, which generate an abelian subgroup $\mathbb{Z}_2^3$.
Because $S \cap N$ contains an even number of elements, it also contains an even number of involutions. But these involutions commute and there are no induced $4$-cycles in $\G$, so it easily follows that $S \cap N$ contains no involutions. Because $N$ only has elements of order $1,2,7,13,26,$ and $91$, and $\G$ contains no induces odd-cycles besides triangles, it follows that $S \cap N$ only contains elements of order $26$. Thus, the connection set $S$ has at least two elements of order $26$.

Next, we consider the normal subgroup $K=\mathbb{Z}_2^3 \times D_{26}$, with quotient group $G/K$ isomorphic to $\mathbb{Z}_7$. Note that all elements of order $26$ in $G$ are in $K$, so it follows that $S \cap K$ contains at least two elements. Because the quotient matrix corresponding to the equitable partition of the cosets of $K$ is symmetric and cyclic, it follows that there are essentially only three options; the first row of the quotient matrix must be $[4~ 1~ 0~ 0~ 0~ 0~ 1]$, $[2~ 2~ 0~ 0~ 0~ 0~ 2]$, or $[2~ 0~ 1~ 1~ 1~ 1~ 0]$. All three matrices have eigenvalues of degree $3$ (related to eigenvalues of the $7$-cycle; the roots of $x^3+x^2-2x-1$). But $\G$ has no such eigenvalues, so we have a contradiction.
\end{proof}

Finally, we note that Bamberg and Giudici \cite{BG} claim that none of the classical generalized hexagons and octagons have a group of automorphisms that acts regularly on the points. This implies that none of the point graphs of the known generalized hexagons and octagons are Cayley graphs.

\section{Distance-regular graphs with valency $3$}\label{sec:3}
All distance-regular graphs with valency $3$ are known; see \cite[Thm.~7.5.1]{BCN}. In Table \ref{tabledrgvalency3}, we give an overview of all possible intersection arrays and corresponding graphs, and indicate which of these is a Cayley graph. The latter will follow from the results in the previous section, and the investigations in the current section, as commented in the table. Note that for each intersection array in Table \ref{tabledrgvalency3} there is a unique distance-regular graph.
By $n$, $d$, and $g$, we denote the number of vertices, diameter, and girth, respectively.
\begin{center}
\begin{table}[h]
\begin{tabular}{ l r r c l  c l}
  \hline
  Intersection array & $n$ & $d$ & $g$ & Name & Cayley & Comments\\
  \hline
  % after \\: \hline or \cline{col1-col2} \cline{col3-col4} ...
  \{3;1\}& 4 & 1 & 3&$K_{4}$ & Yes & Sec.~\ref{complete bipartite minus matching}\\
  \{3,2;1,3\} & 6 & 2 &4& $K_{3,3}$ & Yes & Sec.~\ref{complete bipartite minus matching}\\
  \{3,2,1;1,2,3\} & 8 & 3 &4& Cube $ \sim K_{3,3}^*$ & Yes & Sec.~\ref{complete bipartite minus matching}\\
  \{3,2;1,1\} & 10 & 2 &5& Petersen $\sim O_3$ & No & Sec.~\ref{Oddgraph}\\
  \{3,2,2;1,1,3\} & 14 & 3 &6& Heawood $\sim IG(7,3,1)$ & Yes & Sec.~\ref{symmetricdesign}\\
  \{3,2,2,1;1,1,2,3\} & 18 & 4 &6& Pappus $\sim$ & Yes & Prop.~\ref{prop:affineplane}\\
      & & &&\hspace{.5cm}$ IG(AG(2,3)\setminus pc)$ &&\\
  \{3,2,2,1,1;1,1,2,2,3\} & 20 & 5 &6& Desargues $\sim DO_3$ & No & Prop.~\ref{double Odd}\\
  \{3,2,1,1,1;1,1,1,2,3\} & 20 & 5 &5& Dodecahedron & No & Folklore\\
  \{3,2,2,1;1,1,1,2\}& 28 & 4 &7& Coxeter & No & Prop.~\ref{Coxeter} \\
  \{3,2,2,2;1,1,1,3\} & 30 & 4 &8& Tutte's 8-cage $\sim$ & No & Prop.~\ref{IG(GQ)}\\
     & && &\hspace{.5cm}$IG(GQ(2,2))$ &&\\
  \{3,2,2,2,2,1,1,1;& 90 & 8 & 10&Foster & No & Prop.~\ref{Foster}\\
   \hspace{.5cm}1,1,1,1,2,2,2,3\}  & && && &\\
  \{3,2,2,2,1,1,1; & 102 & 7 &9& Biggs-Smith & No & Prop.~\ref{Biggs-Smith}\\
  \hspace{.5cm}1,1,1,1,1,1,3\}  && & & &&\\
  \{3,2,2,2,2,2; & 126 & 6 &12& Tutte's 12-cage $\sim$ & No & Prop.~\ref{IG(GH)}\\
   \hspace{.5cm}1,1,1,1,1,3\}  & && &\hspace{.5cm}$IG(GH(2,2))$ &&\\
  \hline
\end{tabular}
\caption{Distance-regular graphs with valency $3$}\label{tabledrgvalency3}
\end{table}
\end{center}
The first graph in the table that does not occur in the previous section is the dodecahedron. It is however well known that this graph is not a Cayley graph; see for example \cite{fullerene}, where it is shown that the only fullerene Cayley graph is the football (or buckyball) graph.

Also the fact that the Coxeter graph is not a Cayley graph is folklore. In the
literature, e.g., \cite{EK}, it is mentioned as one of the four non-Hamiltonian vertex-
transitive graphs on more than two vertices, and it is noted that none of these four is a Cayley graph. Indeed, the automorphism group of the Coxeter graph is $PGL(2,7)$, and this group has no subgroups of order 28.
%We could however not find a proof in the literature that the Coxeter graph is not a Cayley graph. The Coxeter %graph has distinct eigenvalues $3,2,-1$, and $-1 \pm \sqrt{2}$.

\begin{prop} \label{Coxeter}
The Coxeter graph is not a Cayley graph.
\end{prop}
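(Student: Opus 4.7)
The plan is to exploit the fact that the Coxeter graph has $28$ vertices and a well-understood automorphism group. If it were a Cayley graph, then by the characterization in Section \ref{sec:pre}, its automorphism group would contain a subgroup of order $28$ acting regularly on the vertices. So the strategy is simply to rule out the existence of any subgroup of order $28$ in the full automorphism group.

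First I would recall (this is classical, and mentioned in the paragraph preceding the proposition) that $\Aut(\G) = PGL(2,7)$, which has order $336 = 2^4 \cdot 3 \cdot 7$. Next I would analyze the Sylow structure: a Sylow $7$-subgroup $P$ of $PGL(2,7)$ has order $7$, and its normalizer $N_{PGL(2,7)}(P)$ has order $42$ (it is the stabilizer of a point on the projective line over $GF(7)$, a Frobenius-type group).

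Now suppose for contradiction that $H \leq PGL(2,7)$ has order $28 = 4 \cdot 7$. Then $H$ contains a Sylow $7$-subgroup $P$ of $PGL(2,7)$. The number of Sylow $7$-subgroups of $H$ divides $4$ and is congruent to $1$ modulo $7$, so it equals $1$, meaning $P \trianglelefteq H$. Therefore $H \leq N_{PGL(2,7)}(P)$, which forces $28 \mid 42$; this is false, giving the desired contradiction. Hence $PGL(2,7)$ has no subgroup of order $28$ and the Coxeter graph is not a Cayley graph.

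There is no real obstacle here; the only thing to be a bit careful about is stating (or citing) that $\Aut(\G) = PGL(2,7)$ and that the Sylow $7$-normalizer has order $42$, both of which are standard. As an alternative (noted in the paper as folklore), one could simply invoke the fact that the Coxeter graph is one of the known non-Hamiltonian vertex-transitive graphs other than $K_2$, none of which is a Cayley graph; but the Sylow argument above is self-contained and short.
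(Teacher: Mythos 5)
Your proof is correct and follows the same route as the paper, which simply asserts that $\Aut(\G)=PGL(2,7)$ has no subgroup of order $28$; your Sylow computation (a subgroup of order $28$ would normalize a Sylow $7$-subgroup, forcing $28\mid 42$) correctly supplies the detail the paper leaves unproved. Nothing further is needed.
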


The Foster graph is a bipartite distance-regular graph that can be described as
the incidence graph of a partial linear space that can be considered as a $3$-cover of
the generalized quadrangle of order $(2,2)$. Its halved graphs are distance-regular
with intersection array $\{6,4,2,1;1,1,4,6\}$  (e.g., see \cite[Proposition 4.2.2]{BCN}). The halved graph on the points is the
collinearity graph of this partial linear space.

\begin{prop} \label{Foster}
The Foster graph is not a Cayley graph.
\end{prop}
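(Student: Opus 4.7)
The plan is to reduce to the halved graph and exploit the arithmetic of its vertex count together with Lemma \ref{abelian}. Assume for contradiction that the Foster graph $\Gamma$ is a Cayley graph. Since $\Gamma$ is bipartite, by Lemma \ref{distance-i} each of its halved graphs is also a Cayley graph. Call one of these halved graphs $\Delta$; by the intersection array cited above, $\Delta$ is a distance-regular graph on $45$ vertices with intersection array $\{6,4,2,1;1,1,4,6\}$, so that if $\Delta = \Cay(G,S)$ then $|G|=45$ and $|S|=6$.

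Next I would invoke the Sylow theorems to pin down $G$. Since $45 = 3^2 \cdot 5$ and the number of Sylow $5$-subgroups divides $9$ and is congruent to $1 \pmod 5$, there is a unique (hence normal) Sylow $5$-subgroup; similarly the Sylow $3$-subgroup is unique and normal. Therefore $G$ is the direct product of a group of order $9$ and a group of order $5$, and since groups of prime-power order with exponent a prime or prime squared in these ranges are abelian, $G$ itself is abelian.

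Now I would apply Lemma \ref{abelian}: since $|S|=6>2$ and $G$ is abelian, $\Delta$ must contain a (not necessarily induced) $4$-cycle. The final step is to show that $\Delta$ has no $4$-cycles at all. From the intersection array, $c_2=1$ and $a_1 = k - b_1 - c_1 = 6-4-1 = 1$, so any two vertices of $\Delta$ at distance $2$ share exactly one common neighbor, and any two adjacent vertices share exactly one common neighbor as well. A closed walk $v_1v_2v_3v_4v_1$ of length $4$ would force $v_2$ and $v_4$ to be two distinct common neighbors of $v_1$ and $v_3$, contradicting both possibilities for the distance between $v_1$ and $v_3$. Hence $\Delta$ contains no $4$-cycle, which contradicts the conclusion of Lemma \ref{abelian}.

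There is no real obstacle here; the only care needed is to verify cleanly that the intersection array of $\Delta$ precludes any (not necessarily induced) $4$-cycle, and this is where both $c_2=1$ and $a_1=1$ are used together to handle the two possible distances between $v_1$ and $v_3$. Combining the three ingredients --- Lemma \ref{distance-i} to pass to $\Delta$, the Sylow argument to force $G$ abelian, and Lemma \ref{abelian} against the $c_2=a_1=1$ constraints --- yields the desired contradiction and shows that the Foster graph is not a Cayley graph.
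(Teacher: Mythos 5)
Your proposal is correct and follows essentially the same route as the paper's own proof: pass to the halved graph via Lemma \ref{distance-i}, use Sylow's theorem to force the group of order $45$ to be abelian, and then contradict Lemma \ref{abelian} using $a_1=c_2=1$. The only difference is that you spell out the Sylow computation and the $4$-cycle exclusion in more detail than the paper does.
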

\begin{proof}
Suppose that the Foster graph is a Cayley graph. By Lemma \ref{distance-i}, its halved
graphs are also Cayley graphs, and these are distance-regular with intersection
array $\{6,4,2,1;1,1,4,6\}$ on $45$ vertices. So suppose that this halved graph is a Cayley graph $Cay(G,S)$, with $G$ of order $45$ and $S$ of size $6$. By Sylow's
theorem, $G$ must be abelian. By Lemma \ref{abelian}, it follows that $\G$ contains a $4$-cycle, which contradicts the fact that both the intersection numbers $a_1$ and $c_2$ are equal to $1$. Thus, a distance-regular
graph with intersection array $\{6,4,2,1;1,1,4,6\}$ cannot be a Cayley graph, and
hence neither can the Foster graph.
\end{proof}

As a side result, we have thus obtained the following.

\begin{cor}\label{halffoster} The collinearity graph of the $3$-cover of the generalized quadrangle $GQ(2,2)$, the unique distance-regular graph with intersection array \linebreak $\{6,4,2,1;1,1,4,6\}$, is not a Cayley graph.
\end{cor}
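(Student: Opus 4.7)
The plan is simply to extract and record the intermediate conclusion that was already established inside the proof of Proposition \ref{Foster}. That proof does not just show that the Foster graph is not Cayley: on the way, it proves that any distance-regular graph with intersection array $\{6,4,2,1;1,1,4,6\}$ fails to be a Cayley graph. The corollary only names this consequence for the collinearity graph of the $3$-cover of $GQ(2,2)$, which is the unique graph realizing that array.

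Concretely, I would assume for contradiction that the graph $\Gamma$ is a Cayley graph $\Cay(G,S)$, with $|G|=45$ and $|S|=k=6$. First I would invoke Sylow's theorem to force $G$ to be abelian: the number of Sylow $3$-subgroups divides $5$ and is $\equiv 1\pmod 3$, hence equals $1$, and the number of Sylow $5$-subgroups divides $9$ and is $\equiv 1\pmod 5$, hence also equals $1$. Thus both Sylow subgroups are normal, so $G$ is a direct product of groups of orders $9$ and $5$, both of which are abelian, and therefore $G$ itself is abelian. Next, Lemma \ref{abelian} applies (since $|S|=6>2$) and forces $\Gamma$ to contain a $4$-cycle. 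But from the intersection array one reads off $c_2=1$ and $a_1=k-b_1-c_1=6-4-1=1$, which precludes any $4$-cycle in $\Gamma$: a $4$-cycle would require two vertices at distance $2$ sharing at least two common neighbors. This contradicts the Cayley assumption.

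There is essentially no obstacle here; the corollary is a bookkeeping observation that isolates a lemma already implicit in the Foster argument. The only thing worth being careful about is making clear that the Sylow-plus-Lemma \ref{abelian} argument depends solely on the parameters $n=45$, $k=6$, $a_1=1$, and $c_2=1$, and not on any further structural feature of the Foster graph itself—so the same reasoning applies verbatim to any distance-regular graph with the given intersection array, and in particular to the unique such graph, namely the collinearity graph of the $3$-cover of $GQ(2,2)$.
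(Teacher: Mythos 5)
Your proposal is correct and is exactly the paper's argument: the corollary is stated as a side result of the proof of Proposition \ref{Foster}, which establishes precisely that a Cayley graph on $45$ vertices with $|S|=6$ would have an abelian group (by Sylow) and hence a $4$-cycle (by Lemma \ref{abelian}), contradicting $a_1=c_2=1$. Your added details (the Sylow counting and the case analysis on the opposite vertices of the $4$-cycle) are correct fillings-in of steps the paper leaves implicit.
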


What remains is to consider the Biggs-Smith graph. The eigenvalues of this graph are very exceptional for a distance-regular graph. It has five distinct irrational eigenvalues, and distinct rational eigenvalues $3,2$, and $0$.

\begin{prop} \label{Biggs-Smith}
The Biggs-Smith graph is not a Cayley graph.
\end{prop}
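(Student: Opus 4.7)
The plan is to exploit the Biggs-Smith graph's restricted integer spectrum $\{3, 2, 0\}$, combined with a classification of the groups of order $102$. First I would enumerate the groups of order $102 = 2 \cdot 3 \cdot 17$. By Sylow, the Sylow $17$-subgroup is normal, and together with any Sylow $3$-subgroup it forms a cyclic normal subgroup $N \cong \mathbb{Z}_{51}$ of index $2$. By Schur--Zassenhaus, $G \cong \mathbb{Z}_{51} \rtimes \mathbb{Z}_2$. Since $\Aut(\mathbb{Z}_{51}) \cong \mathbb{Z}_2 \times \mathbb{Z}_{16}$ has exactly four involutions (including the identity), there are exactly four groups of order $102$: $\mathbb{Z}_{102}$, $D_{102}$, $S_3 \times \mathbb{Z}_{17}$, and $\mathbb{Z}_3 \times D_{34}$.

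Assume $\Gamma$ (the Biggs-Smith graph) is a Cayley graph $\Cay(G,S)$ with $|S|=3$. Because $\Gamma$ has girth $9 > 4$ and $|S|>2$, Lemma \ref{abelian} excludes $G = \mathbb{Z}_{102}$. Because $\Gamma$ is neither a cycle, a complete graph, a complete multipartite graph, nor bipartite (it has odd girth $9$, so cannot be the incidence graph of a symmetric design), Proposition \ref{dihedralclassification} excludes $G = D_{102}$.

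The crux is then the two remaining groups. In each of them, $\mathbb{Z}_3 \times \mathbb{Z}_{17} \cong \mathbb{Z}_{51}$ sits as a normal subgroup $N$ of index $2$, so the partition of $G$ into the two cosets of $N$ is equitable (Section \ref{sec:normal}), with quotient matrix
$\begin{pmatrix} a & 3-a \\ 3-a & a \end{pmatrix}$,
where $a = |S \cap N|$. Its eigenvalues are $3$ and $2a-3$, and the latter must be an eigenvalue of $\Gamma$. Since $\Gamma$ is connected, $3$-regular, and non-bipartite, $-3$ is not an eigenvalue, so the only integer eigenvalues of $\Gamma$ are $3, 2, 0$. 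As $2a-3$ is odd, we must have $2a-3 = 3$, i.e., $a = 3$ and $S \subseteq N$. But then $\seq{S} \subseteq N \subsetneq G$, contradicting connectedness of the Cayley graph. The only nonroutine step is identifying the common index-$2$ normal subgroup $\mathbb{Z}_{51}$ in the two non-cyclic non-dihedral cases; once that is in hand, the very restrictive integer spectrum and the parity of $2a-3$ dispatch both cases at once.
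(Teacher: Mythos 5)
Your proof is correct and is essentially the paper's own argument: both hinge on the index-$2$ subgroup of order $51$, the resulting equitable two-coset partition with quotient eigenvalue $2m-3$, and the fact that the Biggs--Smith graph has no integer eigenvalue other than $3,2,0$. The preliminary classification of the four groups of order $102$ and the separate treatment of the cyclic and dihedral cases are unnecessary, since the coset argument applies uniformly to every group of order $102$.
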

\begin{proof}
Suppose that the Biggs-Smith graph $\G$ is a Cayley graph $Cay(G,S)$. Then $|G|=102$, so $G$ has a subgroup $H$ of order $51$. It follows that the two cosets of $H$ induce an equitable partition for $\G$. Because $\G$ is connected and not bipartite, the quotient matrix is of the form $$\begin{bmatrix}m&3-m\\3-m&m\end{bmatrix},$$ where $m=1$ or $m=2$. This implies that $\G$ has an eigenvalue $-1$ or $1$, which is a contradiction.
\end{proof}

Now we can conclude this section by the following result.
\begin{thm}
  Let $\Gamma$ be a distance-regular Cayley graph with valency $3$. Then $\Gamma$ is isomorphic to one of the following graphs.
  \begin{itemize}
    \item the complete graph $K_{4}$,
    \item the complete bipartite graph $K_{3,3}$,
    \item the cube $Q_3$,
    \item the Heawood graph $IG(7,3,1)$,
    \item the Pappus graph $IG(AG(2,3)\setminus pc)$.
  \end{itemize}
\end{thm}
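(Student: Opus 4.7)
The plan is to invoke the complete classification of cubic distance-regular graphs due to Biggs, Smith, and others (cited as \cite[Thm.~7.5.1]{BCN}), which guarantees that any such graph appears as one of the thirteen entries in Table \ref{tabledrgvalency3}, and then to verify, one row at a time, whether the graph in that row is a Cayley graph. Since the table is exhaustive, the theorem reduces to a finite case check.

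For the five positive cases, I would simply point to the explicit Cayley descriptions given earlier: $K_4$, $K_{3,3}$, and the cube $Q_3 \cong K_{3,3}^*$ are realised as Cayley graphs in Section \ref{complete bipartite minus matching}; the Heawood graph is the incidence graph $IG(7,3,1)$ of the Fano plane and thus arises from a Singer difference set as in Section \ref{symmetricdesign}; and the Pappus graph is $IG(AG(2,3)\setminus pc)$, shown to be a Cayley graph in Proposition \ref{prop:affineplane}. Nothing further is needed for these five.

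For the eight negative cases, I would line up the relevant results from earlier in the paper: the Petersen graph $O_3$ is ruled out by Godsil's classification of Kneser Cayley graphs (Section \ref{Oddgraph}); the Desargues graph $DO_3$ by Proposition \ref{double Odd}; the dodecahedron by the cited folklore result (it is one of the four non-Hamiltonian vertex-transitive graphs and $|\Aut|=120$ has no regular subgroup of order $20$); the Coxeter graph by Proposition \ref{Coxeter}; Tutte's $8$-cage $IG(GQ(2,2))$ by Proposition \ref{IG(GQ)} (with $s=2$, since $s+1=3$); the Foster graph by Proposition \ref{Foster}; the Biggs--Smith graph by Proposition \ref{Biggs-Smith}; and Tutte's $12$-cage $IG(GH(2,2))$ by Proposition \ref{IG(GH)} together with the vertex-transitivity remark (the two $GH(2,2)$'s being dual but non-isomorphic).

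There is no main obstacle here: all the substantive work has already been done in the preceding sections, and the theorem is essentially a summary statement. The only care required is bookkeeping, namely matching each row of Table \ref{tabledrgvalency3} to the correct citation; the right-hand columns of that table already encode exactly this correspondence, so the proof can in fact be compressed to a single sentence that refers the reader to the table.
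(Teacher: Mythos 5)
Your proposal matches the paper's own treatment: the theorem is established there exactly as you describe, by combining the known classification of cubic distance-regular graphs from \cite[Thm.~7.5.1]{BCN} with the row-by-row references recorded in Table \ref{tabledrgvalency3}, all of whose citations you reproduce correctly. One small correction: the parenthetical you attach to the dodecahedron actually belongs to the Coxeter graph (the dodecahedron is Hamiltonian, so it is not one of the four non-Hamiltonian vertex-transitive graphs); the paper's ``folklore'' justification for the dodecahedron is instead the classification of fullerene Cayley graphs in \cite{fullerene}.
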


\section{Distance-regular graphs with valency $4$}\label{sec:4}
The feasible intersection arrays for distance-regular graphs with valency four were determined by Brouwer and Koolen \cite{BK}. In Table \ref{tabledrgvalency4}, we give an overview of these intersection arrays and corresponding graphs, and indicate which of these is a Cayley graph, like in the previous section. Note that for each intersection array in the table there is a unique distance-regular graph, except possibly for the last array, which corresponds to the incidence graph of a generalized hexagon of order $(3,3)$.

\begin{table}[h]
\begin{center}
\begin{tabular}{ l r r c l c l}
\hline
   Intersection array & $n$ & $d$ & $g$& Name & Cayley & Reference\\
  \hline
  % after \\: \hline or \cline{col1-col2} \cline{col3-col4} ...
  \{4;1\} & 5 & 1 &3& $K_{5}$ & Yes & Sec.~\ref{complete bipartite minus matching}\\
  \{4,1;1,4\} &  6 & 2 &3& $K_{2,2,2}$ & Yes & Sec.~\ref{complete bipartite minus matching}\\
  \{4,3;1,4\} &  8 & 2 &4& $K_{4,4}$ & Yes & Sec.~\ref{complete bipartite minus matching}\\
  \{4,2;1,2\} &  9 & 2 &3& $P(9) \sim H(2,3)$ & Yes & Sec.~\ref{Paley}\\
  \{4,3,1;1,3,4\} &  10 & 3 &4& $K^{*}_{5,5}$ & Yes & Sec.~\ref{complete bipartite minus matching}\\
  \{4,3,2;1,2,4\} &  14 & 3 &4& $IG(7,4,2)$ & Yes & Sec.~\ref{symmetricdesign}\\
  \{4,2,1;1,1,4\} &  15 & 3 &3& L(Petersen) & No & \cite[Prop.~5.1]{ADJ}\\
  \{4,3,2,1;1,2,3,4\} &  16 & 4 &4& $Q_{4}$ & Yes & Sec.~\ref{Cubes}\\
  \{4,2,2;1,1,2\} &  21 & 3 &3& L(Heawood) & Yes & \cite[Ex.~5.7]{ADJ}\\
   \{4,3,3;1,1,4\} &  26 & 3 &6& $IG(13,4,1)$ & Yes & Sec.~\ref{symmetricdesign}\\
   \{4,3,3,1;1,1,3,4\} &  32 & 4 &6& $IG(A(2,4)\setminus pc)$ & Yes & Prop.~\ref{prop:affineplane}\\
   \{4,3,3;1,1,2\} &  35 & 3 &6& $O_4$ & No & Sec.~\ref{Oddgraph}\\
   \{4,2,2,2;1,1,1,2\} &  45 & 4 &3& L(Tutte's 8-cage)
   & No & Prop.~\ref{tuttecoxeter}\\%\cite[Prop.~3.4]{ADJ}\\
   \{4,3,3,2,2,1,1; &  70 & 7 &6& $DO_4$ & No & Prop.~\ref{double Odd}\\
    \hspace{.5cm}1,1,2,2,3,3,4\} & & & && &\\
   \{4,3,3,3;1,1,1,4\} &  80 & 4 &8& $IG(GQ(3,3))$ & No & Prop.~\ref{IG(GQ)}\\
   \{4,2,2,2,2,2; &  189 & 6 &3& L(Tutte's 12-cage)
   & No & Prop.~\ref{CL(GD(2,1))}\\
     \hspace{.5cm}1,1,1,1,1,2\} & & & && &\\
   \{4,3,3,3,3,3; &  728 & 6 &12& $IG(GH(3,3))$ & No & Prop.~\ref{IG(GH)}\\
     \hspace{.5cm}1,1,1,1,1,4\}  && & && &\\
  \hline
\end{tabular}
 \caption{Distance-regular graphs with valency $4$}\label{tabledrgvalency4}
\end{center}
\end{table}

In \cite{ADJ}, distance-regular Cayley graphs with least eigenvalue $-2$ were studied. It was, among others, shown that the line graph of the Petersen graph is not a Cayley graph (see \cite[Prop.~5.1]{ADJ}), and that the line graph of Tutte's $8$-cage is not a Cayley graph (see Section \ref{sec:erratum}). On the other hand, it was shown that the line graph of the Heawood graph is a Cayley graph, over $\mathbb{Z}_7 \rtimes \mathbb{Z}_3$ (see \cite[Ex.~5.7]{ADJ}). In Proposition~\ref{CL(GD(2,1))}, we obtained that the line graph of Tutte's $12$-cage is not a Cayley graph. We can therefore conclude this section with the following result.

\begin{thm}
Let $\Gamma$ be a distance-regular Cayley graph with valency $4$. Then $\Gamma$ is isomorphic to one of the following graphs.
  \begin{itemize}
    \item the complete graph $K_{5}$,
    \item the octahedron graph $K_{2,2,2}$,
    \item the complete bipartite graph $K_{4,4}$,
    \item the Paley graph $P(9)$,
    \item the complete bipartite graph $K_{5,5}$ minus a complete matching,
    \item the incidence graph of the $2$-$(7,4,2)$ design,
    \item the cube graph $Q_4$,
    \item the line graph of the Heawood graph,
    \item the incidence graph of the projective plane over $GF(3)$,
    \item the incidence graph of the affine plane over $GF(4)$ minus a parallel class of lines.
  \end{itemize}
\end{thm}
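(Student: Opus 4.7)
The proof will essentially be a table-walk: Brouwer and Koolen~\cite{BK} established that every distance-regular graph of valency $4$ has one of the $17$ intersection arrays listed in Table~\ref{tabledrgvalency4}, and they also showed that for each array a unique distance-regular graph exists (except possibly for the last array, where every candidate comes from a generalized hexagon of order $(3,3)$ and is handled by Proposition~\ref{IG(GH)} regardless of uniqueness). So I would begin the proof by quoting this classification, observing that uniqueness reduces the problem to deciding, array by array, whether the associated graph is a Cayley graph.

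Next I would split the $17$ arrays into the ``Yes'' rows and the ``No'' rows of the table. For the ``Yes'' rows (the ten graphs listed in the conclusion), I would simply cite the explicit Cayley descriptions already given in the paper: $K_5$, $K_{2,2,2}$, $K_{4,4}$, $K_{5,5}^*$ from Section~\ref{complete bipartite minus matching}; $P(9)\cong H(2,3)$ from Section~\ref{Paley}; $Q_4$ from Section~\ref{Cubes}; the incidence graphs $IG(7,4,2)$ and $IG(13,4,1)$ from Section~\ref{symmetricdesign}; $IG(AG(2,4)\setminus pc)$ from Proposition~\ref{prop:affineplane}; and the line graph of the Heawood graph from \cite[Ex.~5.7]{ADJ}. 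Each of these references either constructs the graph as a Cayley graph directly or quotes the construction, so nothing needs to be redone.

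For the ``No'' rows I would invoke the corresponding propositions: $L(\text{Petersen})$ by \cite[Prop.~5.1]{ADJ}; the Odd graph $O_4$ by Godsil's theorem as recalled in Section~\ref{Oddgraph}; $L(\text{Tutte's }8\text{-cage})$ by Proposition~\ref{tuttecoxeter}; the doubled Odd graph $DO_4$ by Proposition~\ref{double Odd}; $IG(GQ(3,3))$ by Proposition~\ref{IG(GQ)}; $L(\text{Tutte's }12\text{-cage})$ by Proposition~\ref{CL(GD(2,1))}; and $IG(GH(3,3))$ by Proposition~\ref{IG(GH)}. Each of these is already proved in the paper, so the final proof reduces to a verification that every row of Table~\ref{tabledrgvalency4} has been accounted for.

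The only real obstacle is bookkeeping: making sure that none of the $17$ intersection arrays is overlooked and that every ``Yes'' entry is matched to an explicit Cayley construction while every ``No'' entry is matched to a rigorous non-existence argument. Since all of this work has been done in Sections~\ref{sec:pre}--\ref{generalizedpolygons} and the present section, the proof of the theorem itself is just a summary referring the reader to the relevant column of Table~\ref{tabledrgvalency4}; no additional computation is needed.
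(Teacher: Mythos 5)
Your proposal is correct and follows essentially the same route as the paper: the paper's proof of this theorem is precisely the table-walk through the Brouwer--Koolen classification, citing the same constructions for the ten ``Yes'' rows and the same propositions for the seven ``No'' rows, with the uniqueness caveat for the generalized hexagon of order $(3,3)$ handled exactly as you describe. Nothing is missing.
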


\section{Distance-regular graphs with valency $5$}\label{sec:5}

In Table \ref{tabledrgvalency5}, we list all known putative intersection arrays for distance-regular graphs with valency $5$. We expect that this list is complete, but there is no proof for this. It contains all intersection arrays with diameter at most $7$. % and all with girth at most $6$\textcolor{red}{double check}.
This can be derived from the tables in \cite{tables} and \cite{DKT}.
All of the graphs in the table are unique, given their intersection arrays, except possibly the incidence graph of a generalized hexagon of order $(4,4)$ (the last case).

\begin{table}[h]
\begin{center}
\begin{tabular}{ l r r c l c l}
  \hline
   Intersection array & $n$ & $d$ &$g$& Name & Cayley & Reference\\
  \hline
  % after \\: \hline or \cline{col1-col2} \cline{col3-col4} ...
 \{5;1\} & 6 & 1 &3& $K_{6}$ & Yes & Sec.~\ref{complete bipartite minus matching}\\
 \{5,4;1,5\} &  10 & 2 &4& $K_{5,5}$ & Yes & Sec.~\ref{complete bipartite minus matching}\\
 \{5,2,1;1,2,5\} &  12 & 3 &3& Icosahedron & Yes & Folklore\\
 \{5,4,1;1,4,5\} &  12 & 3 &4 &$K^{*}_{6,6}$ & Yes & Sec.~\ref{complete bipartite minus matching}\\
 \{5,4;1,2\} &  16 & 2 &4& Folded $5$-cube & Yes & Sec.~\ref{Cubes}\\
 \{5,4,3;1,2,5\} &  22 & 3 &4& $IG(11,5,2)$ & Yes & Sec.~\ref{symmetricdesign}\\
 \{5,4,3,2,1;1,2,3,4,5\} &  32 & 5 &4& $Q_{5}$ & Yes & Sec.~\ref{Cubes}\\
 \{5,4,1,1;1,1,4,5\} &  32 & 4 &5& Armanios-Wells & Yes & Prop.~\ref{Armanios-Wells}\\
 \{5,4,2;1,1,4\} &  36 & 3 &5& Sylvester & No & Prop.~\ref{Sylvester}\\
  \{5,4,4;1,1,5\} &  42 & 3 &6& $IG(21,5,1)$ & Yes & Sec.~\ref{symmetricdesign}\\
  \{5,4,4,1;1,1,4,5\} &  50 & 4 &6& $IG(A(2,5)\setminus pc)$ & Yes & Prop.~\ref{prop:affineplane}\\
  \{5,4,4,3;1,1,2,2\} &  126 & 4 &6& $O_{5}$ & No & Sec.~\ref{Oddgraph}\\
  \{5,4,4,4;1,1,1,5\} &  170 & 4 &8& $IG(GQ(4,4))$ & No & Prop.~\ref{IG(GQ(4,4))}\\
  \{5,4,4,3,3,2,2,1,1; &  252 & 9 &6& $DO_{5}$ & No & Prop.~\ref{double Odd}\\
   \hspace{.5cm}1,1,2,2,3,3,4,4,5\}  && & && &\\
  \{5,4,4,4,4,4; &  2730 & 6 &12& $IG(GH(4,4))$ & No & Prop.~\ref{IG(GH)}\\
  \hspace{.5cm}1,1,1,1,1,5\}  && & && &\\
  \hline
\end{tabular}
 \caption{Distance-regular graphs with valency $5$}\label{tabledrgvalency5}
\end{center}
\end{table}

It is well-known that the icosahedron is a Cayley graph. By using {\sf GAP} \cite{GAP} and similar codes as in \cite[p.3]{AJ}, we checked that we can indeed describe the icosahedron as a Cayley graph over the alternating group $Alt(4)$, with connection set $S=\{(123),(132),(12)(34),(134),(143)\}$.
%\begin{equation*}
%Alt(4)=\seq{a,b \mid a^{3}=b^{2}=(ab)^{3}=e}, S=\{a,a^{-1},ab,(ab)^{-1},b\}.
%end{equation*}
According to Miklavi\v{c} and Poto\v{c}nik \cite{MP2}, the icosahedron is the smallest distance-regular Cayley graph over a non-abelian group, if we exclude cycles and the graphs from Section \ref{complete bipartite minus matching}.

Also the Armanios-Wells graph is a Cayley graph. As far as we know, this was not known before.

Indeed, let $G$ be the group generated by elements $g_i$, with $i=1,2,3,4,$ each of order $2$, such that $[g_i,g_j]$ is the same element, $a$ say, for all $i \neq j$. This group is isomorphic to $(\mathbb{Z}_{2} \times Q_{8}) \rtimes \mathbb{Z}_{2}$, where $Q_8$ is the group of quaternions. Now let $S=\{g_1,g_2,g_3,g_4,g_1g_2g_3g_4\}$. Then it is not hard to check that the Cayley graph $Cay(G,S)$ is distance-regular with the same intersection array as the Armanios-Wells graph $\G$, and hence that it must be the latter. In order to indeed check this, it is useful to know that $\G$ is an antipodal double cover with diameter $4$, and that in this case $S_4=\{a\}$, and consequently $S_3=Sa$ (see Section \ref{sec:normal}). We double-checked this with {\sf GAP} \cite{GAP}, and thus we have the following.

\begin{prop} \label{Armanios-Wells}
The Armanios-Wells graph is a Cayley graph over $(\mathbb{Z}_{2} \times Q_{8}) \rtimes \mathbb{Z}_{2}$.
\end{prop}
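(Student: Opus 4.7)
The plan is to follow essentially the recipe sketched in the text: exhibit an explicit group $G$ of order $32$ and a connection set $S \subseteq G$, verify the resulting Cayley graph $\Cay(G,S)$ is distance-regular with intersection array $\{5,4,1,1;1,1,4,5\}$, and then invoke uniqueness (the Armanios-Wells graph is the unique distance-regular graph with that array) to conclude.

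First I would set up the group. Define $G$ by generators $g_1,g_2,g_3,g_4$, all of order $2$, subject to the relations $[g_i,g_j]=a$ for all $i\neq j$, where $a:=[g_1,g_2]$ is declared to be a single central element. A short computation shows $a$ has order $2$ and is central, and that every element of $G$ can be written uniquely in the form $a^{\varepsilon_0}g_1^{\varepsilon_1}g_2^{\varepsilon_2}g_3^{\varepsilon_3}g_4^{\varepsilon_4}$ with $\varepsilon_i\in\{0,1\}$, giving $|G|=32$. Identifying the subgroup $\langle a, g_1g_2, g_1g_3\rangle$ as a copy of $\mathbb{Z}_2\times Q_8$ (on which $g_1$ acts by conjugation) yields the isomorphism $G\cong(\mathbb{Z}_2\times Q_8)\rtimes\mathbb{Z}_2$.

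Next I would set $S=\{g_1,g_2,g_3,g_4,g_1g_2g_3g_4\}$. Since each element of $S$ has order $2$, the set is inverse-closed; and it generates $G$, so $\Cay(G,S)$ is a connected $5$-regular graph on $32$ vertices. The core verification is the intersection array. Using the relations, I would count, for each $i$, the size of $S_i$ defined by $S_{i+1}=SS_i\setminus(S_i\cup S_{i-1})$, and check that the resulting graph has diameter $4$, is antipodal with antipodal class $\{e,a\}$, and realizes the parameters $b_0=5, b_1=4, b_2=1, b_3=1, c_1=1, c_2=1, c_3=4, c_4=5$. Concretely: a $3$-cycle $e\sim s_i\sim s_is_j\sim e$ would require $s_is_j\in S$, which does not happen (girth at least $4$); a $4$-cycle would require $s_is_j=s_ks_l$ for distinct $\{i,j\}\neq\{k,l\}$, and the commutator relations show the only way this occurs is with the product equal to $a$, which is not in $S\cup S$, ruling out $4$-cycles (so girth $\geq 5$). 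Since the $5$-cycle $e\sim g_1\sim g_1g_2\sim g_1g_2g_3\sim g_1g_2g_3g_4\sim e$ does close up (as $g_1g_2g_3g_4\in S$), the girth is exactly $5$, matching $c_2=1$ and $a_2\ne 0$. Continuing, one checks $|S_2|=10$, $|S_3|=15$, and $S_4=\{a\}$, then verifies $c_3=4$ and $c_4=5$ by counting edges from an element of $S_3$ (respectively $a$) back into $S_2$ (respectively $S_3$), which reduces to straightforward commutator bookkeeping.

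The main obstacle is the bookkeeping for $S_3$ and the verification that $c_3=4$: one must be sure that no element of $S_3$ inadvertently has too many or too few neighbours in $S_2$. This can be organized cleanly using the central element $a$ together with the observation that, because $\Cay(G,S)$ is antipodal of diameter $4$ with $S_4=\{e,a\}\setminus\{e\}$, one automatically obtains $S_3=Sa$, so counting at distance $3$ reduces to counting at distance $1$ after multiplication by $a$. Once the array is verified, uniqueness of the Armanios-Wells graph forces $\Cay(G,S)$ to be isomorphic to it, proving the proposition. As the authors note, this whole computation is easily corroborated with \textsf{GAP}.
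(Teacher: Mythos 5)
Your construction is exactly the paper's: the same presentation of $G$ by four involutions $g_1,\dots,g_4$ with a common central commutator $a$, the same connection set $S=\{g_1,g_2,g_3,g_4,g_1g_2g_3g_4\}$, the same use of the antipodal structure ($S_4=\{a\}$, hence $S_3=Sa$) to organize the verification of the intersection array, and the same appeal to uniqueness of the graph with array $\{5,4,1,1;1,1,4,5\}$. One slip in your structural identification: $\langle a, g_1g_2, g_1g_3\rangle$ is not $\mathbb{Z}_2\times Q_8$ but just $Q_8$ of order $8$, since $a=(g_1g_2)^2$ already lies in $\langle g_1g_2,g_1g_3\rangle$; to realize $G\cong(\mathbb{Z}_2\times Q_8)\rtimes\mathbb{Z}_2$ you should instead take the index-$2$ subgroup $\langle g_1g_2, g_2g_3, g_4\rangle\cong Q_8\times\mathbb{Z}_2$ (note $g_4$ centralizes $g_1g_2$ and $g_2g_3$), with $g_1$ acting by conjugation.
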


A few more observations that we should make are the following. The center of $G$ equals $\seq{a}$, which is of order $2$. The quotient $G/\seq{a}$ is isomorphic to the elementary abelian $2$-group $\mathbb{Z}_{2}^4$, which leads to the well-known description of the quotient graph --- the folded $5$-cube --- as a Cayley graph (see Section \ref{Cubes}).

The group $G$ has a normal subgroup $\seq{g_1g_2,g_2g_3,g_3g_1}$, which is isomorphic to $Q_{8}$. This gives rise to an equitable partition of $\G$ into $4$ cocliques of size $8$.

In addition, the normal subgroup $\seq{g_1g_2,g_2g_3,g_3g_1,g_4}$ is isomorphic to $\mathbb{Z}_{2} \times Q_{8}$, which gives an equitable partition of $\G$ into two $1$-regular induced subgraphs. Together these form a matching, and removing the edges of this matching results in a bipartite $4$-regular graph. This turns out to be the incidence graph of the affine plane of order $4$ minus a parallel class (see Section \ref{affineplane} and Table \ref{tabledrgvalency4}). Alternatively, we obtain that the latter is isomorphic to the Cayley graph $Cay(G,\{g_1,g_2,g_3,g_4\})$.

The remaining intersection array in Table \ref{tabledrgvalency5} is that of the Sylvester graph. This graph has distinct eigenvalues $5,2,-1$, and $-3$ and full automorphism group $Sym(6) \rtimes \mathbb{Z}_{2}$ \cite[p.~394]{BCN}.

\begin{prop} \label{Sylvester}
The Sylvester graph is not a Cayley graph.
\end{prop}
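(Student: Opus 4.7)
The plan is to suppose that the Sylvester graph $\Gamma$ is a Cayley graph $\Cay(G,S)$, so $|G|=36$ and $|S|=5$, and derive a contradiction. Because $\Gamma$ has girth $5>4$ and $|S|>2$, Lemma~\ref{abelian} forces $G$ to be non-abelian. Moreover, $G$ must appear as a regular subgroup of $\Aut(\Gamma)\cong\mathrm{Sym}(6)\rtimes\mathbb{Z}_2$, a group of order $1440$; since $|G|$ coincides with the number of vertices of $\Gamma$, this regularity is equivalent to $G$ acting transitively on $V(\Gamma)$.

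To narrow down the candidate groups, I would first compute the spectrum of $\Gamma$: from the intersection array $\{5,4,2;1,1,4\}$ one obtains eigenvalues $5,\,2,\,-1,\,-3$ with multiplicities $1,\,16,\,10,\,9$. For any normal subgroup $N\lhd G$, the coset partition is equitable (Section~\ref{sec:normal}), so every eigenvalue of its quotient matrix must occur in the spectrum of $\Gamma$. For an index-$2$ normal subgroup this gives a quotient matrix $\bigl[\begin{smallmatrix} m & 5-m \\ 5-m & m \end{smallmatrix}\bigr]$ with other eigenvalue $2m-5\in\{2,-1,-3\}$, forcing $m\in\{1,2\}$; analogous restrictions apply for normal subgroups of indices $3, 4, 6, 9,$ and $12$. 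Combined with Sylow theory for $|G|=2^2\cdot 3^2$, this strongly restricts which of the ten non-abelian isomorphism types of groups of order $36$ can occur as $G$.

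Finally, following the pattern of the proofs of Propositions~\ref{LineGQ3} and~\ref{LineGH3}, I would use \textsf{GAP} to enumerate the conjugacy classes of non-abelian subgroups of order $36$ in $\mathrm{Sym}(6)\rtimes\mathbb{Z}_2$ and verify directly that none of them acts transitively on the $36$ vertices of $\Gamma$. The main obstacle is the case analysis: a priori several non-abelian groups of order $36$ could embed into $\mathrm{Sym}(6)\rtimes\mathbb{Z}_2$, so one either exploits the spectral/equitable-partition constraints above to eliminate most candidates by hand, or relies on the brief \textsf{GAP} check just described to dispose of all cases at once.
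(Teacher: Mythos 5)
Your computational route is sound: since $|G|=|V(\Gamma)|=36$, a regular subgroup is the same thing as a transitive subgroup of order $36$ of $\Aut(\Gamma)\cong \mathrm{Sym}(6)\rtimes\mathbb{Z}_2$, so a {\sf GAP} enumeration of conjugacy-class representatives of subgroups of order $36$, followed by a transitivity check, does settle the proposition --- and the paper accepts exactly this kind of computer verification elsewhere (e.g.\ in Propositions~\ref{CL(GD(2,1))} and~\ref{LinePP3}). The paper's own proof of Proposition~\ref{Sylvester} is genuinely different and essentially computer-free: it lists the ten non-abelian groups of order $36$, eliminates $(\mathbb{Z}_2\times\mathbb{Z}_2)\rtimes\mathbb{Z}_9$ because $\Aut(\Gamma)$ has no element of order $9$, eliminates $\mathbb{Z}_3\times Alt(4)$ by analysing how $S$ can meet the cosets of the $Alt(4)$ subgroup (only three involutions are available, which forces a $4$-cycle), and then, for all remaining groups --- each of which has a normal subgroup $N$ of order $9$ --- combines the equitable-partition/eigenvalue argument you sketch (which pins the quotient row down to $(0,1,2,2)$) with a further structural argument: the unique element $a$ of $S$ in its coset is the only involution in the index-$2$ normal subgroup $N\langle a\rangle$, whence every $s\in S$ commutes with $a$ and a forbidden $4$-cycle appears. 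Be aware that your by-hand alternative, as sketched, would not close the proof on its own: the spectral constraints are \emph{satisfied} by the row $(0,1,2,2)$, so they restrict intersection numbers but eliminate none of the remaining isomorphism types; the real content of the non-computational proof is the subsequent involution/conjugation argument, which your sketch does not supply. So either commit to the {\sf GAP} verification as the actual proof, or be prepared to add the group-theoretic analysis that the paper carries out.
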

\begin{proof}
Suppose that the Sylvester graph $\G$ is a Cayley graph $Cay(G,S)$, then $|G|=36$ and $|S|=5$. Because $\G$ has girth $5$, the group $G$ is non-abelian by Lemma \ref{abelian}. It is known that there are $10$ non-abelian groups of order $36$, of which two do not have a normal subgroup of order $9$; these are $\mathbb{Z}_{3} \times Alt(4)$ and $(\mathbb{Z}_{2} \times \mathbb{Z}_{2}) \rtimes \mathbb{Z}_{9}$.

If $G$ is the latter group (and contains elements of order $9$), then it has automorphisms of order $9$. This contradicts the fact that the full automorphism group of $\G$ equals $Sym(6) \rtimes \mathbb{Z}_{2}$.

Next, we will also show that $G$ cannot be $\mathbb{Z}_{3} \times Alt(4)$, and hence that $G$ must have a normal subgroup of order $9$. Indeed, suppose that $G$ equals $\mathbb{Z}_{3} \times Alt(4)$. The center of this group is isomorphic to $\mathbb{Z}_{3}$, say $Z(G)=\seq{c}$, with $c$ of order $3$. Moreover, $G$ has a normal subgroup $H$ isomorphic to $Alt(4)$ (with cosets $H,Hc,Hc^2$ that form an equitable partition of $\G$).

Now suppose that $hc^i\in S$ for some $h \in H$ and $i=0,1,2$. Then the order of $h$ must be $2$, for if it were $3$ (or $1$, the only other options), then $e \sim hc^i \sim (hc^i)^{2} \sim (hc^i)^{3}=e$, which contradicts the fact that $\G$ has girth $5$. Moreover, if $h \in S$, then $hc$ and $hc^{2}=(hc)^{-1}$ are not in $S$ because that would imply that $e \sim hc \sim c \sim hc^{2} \sim e$, which again gives a contradiction.

Because $Alt(4)$ has only three involutions, there are also only three involutions $h_1,h_2,$ and $h_3$, say, in $H$. Thus, it follows without loss of generality that $S=\{h_1,h_2c,h_2c^2,h_3c,h_3c^2\}$. However, now $e \sim h_2c \sim h_3h_2 \sim h_2c^2 \sim e$, which gives the final contradiction, and hence $G$ cannot be $\mathbb{Z}_{3} \times Alt(4)$.

Thus, the group $G$ has a normal subgroup $N$ of order $9$. The four cosets of $N$ form an equitable partition of $\G$ with quotient matrix
$$\begin{bmatrix}n_1&n_2&n_3&n_4\\n_2&n_1&n_4&n_3\\n_3&n_4&n_1&n_2\\n_4&n_3&n_2&n_1\end{bmatrix},$$
for certain $n_1,n_2,n_3,n_4$ summing to $5$, and because $\G$ is connected, at most one of $n_2,n_3,n_4$ can be 0. Now the quotient matrix has eigenvalues $n_1+n_2+n_3+n_4$, $n_1+n_2-n_3-n_4$, $n_1-n_2+n_3-n_4$, and $n_1-n_2-n_3+n_4$. Because $\G$ has no eigenvalues $3$ and $1$, it follows that $n_1=0,n_2=1,n_3=2,$ and $n_4=2$, up to reordering of the latter three (we omit the easy but technical details).

So there is one coset that intersects $S$ in $n_2=1$ element. Let us call this element $a$, then clearly $O(a)=2$, and the subgroup $N \seq{a}$ is a normal subgroup (of index $2$). Given the quotient matrix, it follows easily that every vertex in the coset $Na$ except $a$ itself is at distance $2$ from $e$.

Now we claim that $a$ is the only involution in $N \seq{a}$. Clearly there are no involutions in $N$ because it has order $9$. Every other element in $Na$ is at distance $2$ from $e$, and hence can be written as $s_1s_2$ for some $s_{1},s_{2} \in S$. Suppose now that $O(s_{1}s_{2})=2$. Then $e \sim s_{2} \sim s_{1}s_{2} \sim s_{2}s_{1}s_{2} \sim e$, a contradiction since the girth of $\G$ is $5$, and we proved our claim.

Now suppose that $s \in S$, with $s \neq a$. Then $s^{-1}as \in N \seq{a}$ since $N \seq{a}$ is a normal subgroup. Because $O(s^{-1}as)=2$, it follows from our above claim that $s^{-1}as=a$. Thus, $sa=as$ and $e \sim a \sim sa=as \sim s \sim e$, which is again a contradiction to the girth of $\G$, and which completes the proof.
\end{proof}

Now we can conclude this section with the following proposition.
\begin{prop} Let $\Gamma$ be a distance-regular Cayley graph with valency $5$, with one of the intersection arrays in Table \ref{tabledrgvalency5}\footnote{Currently, these are the only known putative intersection arrays for distance-regular graphs with valency 5}. Then $\Gamma$ is isomorphic to one of the following graphs.
  \begin{itemize}
    \item the complete graph $K_{6}$,
    \item the complete bipartite graph $K_{5,5}$,
    \item the icosahedron,
    \item the complete bipartite graph $K_{6,6}$ minus a complete matching,
    \item the folded $5$-cube,
    \item the incidence graph of the $2$-$(11,5,2)$ design,
    \item the cube graph $Q_5$,
    \item the Armanios-Wells graph,
    \item the incidence graph of the projective plane over $GF(4)$,
    \item the incidence graph of the affine plane over $GF(5)$ minus a parallel class of lines.
  \end{itemize}
\end{prop}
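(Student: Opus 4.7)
The plan is essentially to verify the statement directly from Table \ref{tabledrgvalency5}, since every row of that table is already accompanied by a citation either constructing the graph as a Cayley graph or ruling out such a construction. Thus, I would begin by noting that the listed intersection arrays are precisely those identified in the discussion preceding the table, and that for each one the associated distance-regular graph is uniquely determined (the incidence graph of a generalized hexagon of order $(4,4)$ being the only possible ambiguity, but it is excluded anyway by Proposition \ref{IG(GH)}).

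The positive cases I would address by pointing to the relevant construction: $K_6$, $K_{5,5}$, and $K^{*}_{6,6}$ are Cayley graphs by the constructions in Section \ref{complete bipartite minus matching}; the folded $5$-cube and $Q_5$ by Section \ref{Cubes}; the incidence graphs $IG(11,5,2)$ and $IG(21,5,1)$ by Section \ref{symmetricdesign}; $IG(AG(2,5) \setminus pc)$ by Proposition \ref{prop:affineplane}; the icosahedron by the explicit Cayley description over $\mathrm{Alt}(4)$ discussed after the table; and the Armanios-Wells graph by Proposition \ref{Armanios-Wells}.

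For the negative cases, I would invoke: Proposition \ref{Sylvester} for the Sylvester graph; Section \ref{Oddgraph} (via Godsil's classification) for $O_5$; Proposition \ref{IG(GQ(4,4))} for the incidence graph of $GQ(4,4)$; Proposition \ref{double Odd} for $DO_5$; and Proposition \ref{IG(GH)} for the incidence graph of any generalized hexagon of order $(4,4)$. Taken together these rule out every remaining array in Table \ref{tabledrgvalency5}, leaving exactly the ten graphs listed in the statement.

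There is no real obstacle here since the proposition is a synthesis of the section's work; the only thing to be careful about is ensuring that the table is exhaustive (which is asserted in the text preceding the table and acknowledged in the footnote of the proposition) and that the uniqueness of the graph per intersection array is invoked so that ``isomorphic to one of the following'' is justified. I would therefore conclude with a single sentence noting that since each intersection array in the table determines its distance-regular graph uniquely (except possibly for $IG(GH(4,4))$, which is handled anyway), the enumeration above is complete.
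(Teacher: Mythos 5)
Your proposal is correct and matches the paper exactly: the paper gives no separate proof for this proposition, presenting it as a direct synthesis of Table \ref{tabledrgvalency5} and the results cited in its final column, which is precisely the row-by-row verification you describe. Your added care about uniqueness of the graph per intersection array (with $IG(GH(4,4))$ as the only possible exception, ruled out regardless by Proposition \ref{IG(GH)}) is consistent with the paper's remarks preceding the table.
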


\section{Distance-regular graphs with girth $3$ and valency $6$ or $7$}\label{sec:67}

Hiraki, Nomura, and Suzuki \cite{HNS} determined the feasible intersection arrays of all distance-regular graphs with valency at most $7$ and girth $3$ (i.e., with triangles). Besides the ones with valency at most $5$ that we have encountered in the previous sections, these are listed in Table \ref{tabledrgvalency6}. For each of the intersection arrays $\{6,3;1,2\}$ and $\{6,4,4;1,1,3\}$, there are exactly two distance-regular graphs (as mentioned in the table). For all others, except possibly the last one with valency $6$, the graphs in the table are unique, given their intersection arrays. For this last case, it is unknown whether the generalized hexagon of order $(3,3)$ is unique.

\begin{table}[h]
\begin{center}
\begin{tabular}{ l r r c l c l}
  \hline
   Intersection array & $n$ & $d$ &$g$& Name & Cayley & Reference\\
  \hline
  % after \\: \hline or \cline{col1-col2} \cline{col3-col4} ...
 \{6;1\} & 7 & 1 &3& $K_{7}$ & Yes & Sec.~\ref{complete bipartite minus matching}\\
 \{6,1;1,6\} &  8 & 2 &3& $K_{2,2,2,2}$ & Yes & Sec.~\ref{complete bipartite minus matching}\\
 \{6,2;1,6\} &  9 & 2 &3& $K_{3,3,3}$ & Yes & Sec.~\ref{complete bipartite minus matching}\\
 \{6,2;1,4\} &  10 & 2 &3& $T(5)$ & No & Sec.~\ref{Oddgraph}\\
 \{6,3;1,3\} &  13 & 2 &3& $P(13)$ & Yes & Sec.~\ref{Paley}\\
 \{6,4;1,3\} &  15 & 2 &3& $\overline{T(6)}\sim GQ(2,2)$ & No & Sec.~\ref{Oddgraph}\\
 \{6,3;1,2\} &  16 & 2 &3& $L_{2}(4)$, Shrikhande & Yes & Sec.~\ref{Cubes}\\
 \{6,4,2;1,2,3\} &  27 & 3 &3& $H(3,3)$ & Yes & Sec.~\ref{Cubes}\\
 \{6,4,2,1;1,1,4,6\} &  45 & 4 &3& halved Foster & No & Cor.~\ref{halffoster}\\
 \{6,3,3;1,1,2\} &  52 & 3 &3& L($IG(13,4,1)$) & No & Prop.~\ref{LinePP3}\\
 \{6,4,4;1,1,3\} &  63 & 4 &3& $GH(2,2)$ ($2\times$)& No & Prop.~\ref{CL(GD(2,1))}\\
 \{6,3,3,3;1,1,1,2\} &  160 & 4 &3& L($IG(GQ(3,3))$) & No & Prop.~\ref{LineGQ3}\\
 \{6,3,3,3,3,3;1,1,1,1,1,2\} &  1456 & 6 &3& L($IG(GH(3,3))$) & No & Prop.~\ref{LineGH3}\\
 \hline
  \{7;1\} & 8 & 1 &3& $K_{8}$ & Yes & Sec.~\ref{complete bipartite minus matching}\\
 \{7,4,1;1,2,7\} &  24 & 3 &3& Klein & Yes & Prop.~\ref{Klein}\\
 \hline
\end{tabular}
 \caption{Distance-regular graphs with girth $3$ and valency $6$ or $7$}\label{tabledrgvalency6}
\end{center}
\end{table}

What remains is to consider the Klein graph. We observe that this is a Cayley graph on the symmetric group $Sym(4)$. Indeed, one can check\footnote{We double-checked this with {\sf GAP} \cite{GAP}} that with $$S=\{(123),(132),(12)(34),(13),(14),(1234),(1432)\},$$ the Cayley graph $Cay(Sym(4),S)$ is a distance-regular antipodal $3$-cover of $K_8$, and hence it must be the Klein graph. We note that in this case the set $S_3=\{(124),(142)\}$, and despite the fact that $N_3=S_3 \cup \{e\}$ is not a normal subgroup, its right cosets form an equitable partition (with quotient $K_8$, of course); cf.~Section \ref{sec:normal}. We thus have the following.

\begin{prop} \label{Klein}
The Klein graph is a Cayley graph over $Sym(4)$.
\end{prop}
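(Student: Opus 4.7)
The plan is to verify directly that $\Gamma = \Cay(Sym(4), S)$ with the given connection set is distance-regular with intersection array $\{7,4,1;1,2,7\}$, and then to invoke the fact (cf.~Table~\ref{tabledrgvalency6}) that the Klein graph is the unique distance-regular graph with this intersection array.

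First I would confirm that $S$ is a valid connection set of size $7$. The elements $(12)(34)$, $(13)$, and $(14)$ are involutions; $(123)$ and $(132)$ are mutual inverses, as are $(1234)$ and $(1432)$. Thus $S = S^{-1}$ and $e \notin S$. Since already the transpositions $(13),(14)$ together with the $3$-cycle $(123)$ generate $Sym(4)$, the Cayley graph $\Gamma$ is connected and $7$-regular.

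Second, I would verify the intersection array. By vertex-transitivity it suffices to check all intersection numbers from the base vertex $e$. Using the recursion $S_{i+1} = SS_i \setminus (S_i \cup S_{i-1})$ from Lemma~\ref{distance-i}, one computes $S_2$ and $S_3$ and checks that $|S_1| = 7$, $|S_2| = 14$, and $|S_3| = 2$, accounting for all $24$ elements of $Sym(4)$; explicitly, the claim is that $S_3 = \{(124),(142)\}$. One then verifies: each edge $(e,s)$ with $s \in S$ lies in exactly $a_1 = 2$ triangles, i.e., $|S \cap Ss| = 2$ for every $s \in S$; for every $s \in S_2$ one has $c_2 = |S \cap Ss| = 2$; and for each $s \in S_3$ one has $c_3 = |S_2 \cap Ss| = 7$ (necessarily, since $a_3 = k - b_3 - c_3 = 0$ and $\Gamma$ has diameter $3$). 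These checks are routine, finite multiplications in $Sym(4)$, which the authors dispatch with {\sf GAP}.

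Finally, once the intersection array is confirmed, uniqueness (Table~\ref{tabledrgvalency6}) identifies $\Gamma$ with the Klein graph. As a supplementary observation, $N_3 = S_3 \cup \{e\} = \langle (124) \rangle$ is a cyclic subgroup of order $3$, and its $8$ right cosets form the antipodal fibers with quotient $K_8$ --- even though $N_3$ is not normal in $Sym(4)$, so the formalism of Section~\ref{sec:normal} does not apply directly and the equitable partition must be observed by hand. The main obstacle is nothing conceptual; it is only the finite bookkeeping in $Sym(4)$ required to confirm $S_2$, $S_3$, and the intersection counts.
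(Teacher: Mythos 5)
Your proposal is correct and follows essentially the same route as the paper: exhibit the connection set $S$, verify by direct finite computation (the authors use {\sf GAP}) that $\Cay(Sym(4),S)$ is distance-regular with the parameters of an antipodal $3$-cover of $K_8$, and conclude by uniqueness that it is the Klein graph. Your supplementary remark that $S_3=\{(124),(142)\}$ and that the non-normal subgroup $N_3$ still yields an equitable partition with quotient $K_8$ is exactly the observation the paper makes.
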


We also note that the normal subgroup $\{e,(12)(34),(13)(24),(14)(23)\}$ gives an equitable partition into $6$ parts, with each coset inducing a matching (which together gives a perfect matching). More interesting is the (normal) alternating subgroup $Alt(4)$, which gives an equitable partition into two parts. On each part, the induced subgraph is the truncated tetrahedron, which is thus a Cayley graph $Cay(Alt(4),\{(123),(132),(12)(34)\})$. This is also the line graph of a bipartite biregular graph on $4 +{{4}\choose{2}}$ vertices with valencies $3$ and $2$, respectively (the Pasch configuration), and a subgraph of the icosahedron; cf.~Section \ref{sec:5}.

We conclude with the following proposition.

\begin{prop} Let $\Gamma$ be a distance-regular Cayley graph with girth $3$ and valency $6$ or $7$. Then $\Gamma$ is isomorphic to one of the following graphs.
  \begin{itemize}
    \item the complete graph $K_{7}$,
    \item the complete graph $K_{8}$,
    \item the complete multipartite graph $K_{2,2,2,2}$,
    \item the complete multipartite graph $K_{3,3,3}$,
    \item the Paley graph $P(13)$,
    \item the lattice graph $L_2(4)$,
    \item the Shrikhande graph,
    \item the Hamming graphs $H(3,3)$,
    \item the Klein graph.
  \end{itemize}
\end{prop}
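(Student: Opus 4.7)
The plan is to invoke the classification of feasible intersection arrays for distance-regular graphs with girth $3$ and valency $6$ or $7$ due to Hiraki, Nomura, and Suzuki \cite{HNS}. This classification, together with the entries of valency $\le 5$ already disposed of in Sections \ref{sec:3}--\ref{sec:5}, reduces the proof to a case-by-case verification of the entries in Table \ref{tabledrgvalency6}. Thus the proposition will follow once we confirm that, for each row of the table, either the graph is a Cayley graph realized by a construction in one of the earlier sections (and therefore appears in the list), or it has been ruled out as a Cayley graph by a previously established result.

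First I would handle the positive cases by pointing to the explicit Cayley descriptions: $K_7$ and $K_8$ are Cayley graphs over any group of the appropriate order (Section \ref{complete bipartite minus matching}); $K_{2,2,2,2}$ and $K_{3,3,3}$ are complete multipartite Cayley graphs (Section \ref{complete bipartite minus matching}); the Paley graph $P(13)$ is a Cayley graph by construction (Section \ref{Paley}); the Hamming graphs $L_2(4)=H(2,4)$ and $H(3,3)$ together with the Shrikhande graph are Cayley graphs over appropriate abelian groups (Section \ref{Cubes}); and the Klein graph is a Cayley graph over $\mathrm{Sym}(4)$ by Proposition \ref{Klein}. These are precisely the items in the displayed list.

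Next I would dispatch the negative cases. The triangular graph $T(5)$ and the complement $\overline{T(6)}$ (which coincides with the collinearity graph of $GQ(2,2)$) are not Cayley graphs by Sabidussi's classification of Cayley triangular graphs recalled in Section \ref{Oddgraph}; the halved Foster graph is not Cayley by Corollary \ref{halffoster}; the line graph of $IG(13,4,1)$ is not Cayley by Proposition \ref{LinePP3}; the two point graphs of the generalized hexagons of order $(2,2)$ with intersection array $\{6,4,4;1,1,3\}$ are not Cayley by Proposition \ref{CL(GD(2,1))}; and the line graphs of $IG(GQ(3,3))$ and $IG(GH(3,3))$ are not Cayley by Propositions \ref{LineGQ3} and \ref{LineGH3}, respectively.

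There is no genuine obstacle: the result is a compilation, and every entry in Table \ref{tabledrgvalency6} has already been classified one way or the other in the earlier sections. The only thing to be careful about is the entry $\{6,4,4;1,1,3\}$, for which there are two non-isomorphic distance-regular graphs (the two dual point graphs of the generalized hexagons of order $(2,2)$); one should note explicitly that Proposition \ref{CL(GD(2,1))} covers both of them, so that no additional Cayley graph slips through. Comparing the positive list with the bullet points in the proposition finishes the proof.
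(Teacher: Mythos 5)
Your proposal is correct and follows essentially the same route as the paper: the result is a compilation over the Hiraki--Nomura--Suzuki classification, with each entry of Table \ref{tabledrgvalency6} settled by the constructions of Sections \ref{complete bipartite minus matching}, \ref{Paley}, \ref{Cubes} and Proposition \ref{Klein} on the positive side, and by Section \ref{Oddgraph}, Corollary \ref{halffoster}, and Propositions \ref{LinePP3}, \ref{CL(GD(2,1))}, \ref{LineGQ3}, \ref{LineGH3} on the negative side. Your explicit remark that both non-isomorphic graphs with intersection array $\{6,4,4;1,1,3\}$ are covered by Proposition \ref{CL(GD(2,1))} is exactly the care the table's ``($2\times$)'' annotation calls for.
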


%\\

\section*{Acknowledgements}
\noindent The research of Mojtaba Jazaeri was in part supported by a grant from School of Mathematics, Institute for Research in Fundamental Sciences (IPM) (No. 95050039). We thank Sasha Gavrilyuk, who, in the final stages of writing this paper, at a conference in Plze\v{n}, pointed us to \cite{TTvM} for what he called Higman's method applied to generalized polygons.

%\section*{References}

\end{document}